\documentclass[12pt,reqno]{amsart}
\usepackage{amsthm, amsfonts, amssymb, color}
\usepackage{mathrsfs}
\usepackage{enumerate}
\usepackage{amsmath}
\usepackage{amstext, amsxtra}
\usepackage{txfonts}
\usepackage[colorlinks, linkcolor=black, citecolor=blue, pagebackref, hypertexnames=false, breaklinks=true]{hyperref}
\usepackage{graphicx}
\usepackage[symbol]{footmisc}
\usepackage{latexsym}
\usepackage{tikz}
\usepackage{cite}
\allowdisplaybreaks
\usepackage[breaklinks=true]{hyperref}

\newtheorem{theorem}{Theorem}[section]
\newtheorem{proposition}[theorem]{Proposition}
\newtheorem{corollary}[theorem]{Corollary}
\newtheorem{lemma}[theorem]{Lemma}
\newtheorem{definition}[theorem]{Definition}

\newtheorem{remark}[theorem]{Remark}

\renewcommand\Re{\operatorname{Re}}
\renewcommand\Im{\operatorname{Im}}

\def\Re {\mathfrak{Re\,}}
\def\Im {\mathfrak{Im\,}}

\def \and {{\qquad\text{and}\qquad}}

\numberwithin{equation}{section}
\theoremstyle{definition}

\title[]
{Fractal Remez inequality on the sphere and observability of the heat equation\\
}

\author{Xinyi Chen,\, Shanlin Huang}

\address{Xinyi Chen, School of Mathematics (Zhuhai), Sun Yat-sen University, Zhuhai 519082, Guangdong, China}
\email{chenxy2288@mail2.sysu.edu.cn}

\address {Shanlin Huang (corresponding author), School of Mathematics (Zhuhai), Sun Yat-sen University, Zhuhai 519082, Guangdong, China}
\email{huangshlin6@mail.sysu.edu.cn}

\subjclass[2020]{35P99, 35Q93, 35K05}
\keywords{Remez's inequality, spectral inequality, observability.}

\begin{document}
	
	\begin{abstract}
		This paper is concerned with Remez-type inequalities and their applications in observability inequality. 
		Our aim is twofold. First, we establish the following fractal Remez's inequality on the unit sphere $\mathbb{S}^{n-1}$
		\begin{align*}
			\sup_{\mathbb{S}^{n-1}} |p|\le C(M,N,n,\delta)\sup_{M} |p|,
		\end{align*}
		where $M \subset \mathbb{S}^{n-1}$ (\(n \ge 2\)) is a fractal set of positive \((n-2+\delta)\)-Hausdorff content for arbitrary \(\delta \in (0,1)\), and $p$ is a spherical polynomial of degree at most $N\in \mathbb{Z}^+$.
		Second, building upon this fractal framework, we establish sharp observability inequalities for the heat equation on the sphere, again valid for all \(\delta\in (0, 1)\), which improve the result of Burq and Moyano [J. Eur. Math. Soc. (JEMS), 25 (4) (2023)] in the spherical setting. Furthermore, as an additional application, we prove a lower-dimensional observability inequality for the heat equation with super-quadratic potentials \(V(x) = |x|^{2m}\) ($m \in \mathbb{Z}^+, m\ge 2$) on the whole space \(\mathbb{R}^n\). 
	\end{abstract}
	
	\maketitle
	
	
	\tableofcontents

	
	
	\section{Introduction and main results}\label{sec1}
	
	\subsection{Background and motivation}
	Remez-type inequalities bound a polynomial's maximum on a set by its values on a smaller subset. The classical result  \cite{Remez} states that if \( p \) is a polynomial of degree at most \( N \), and \( \Omega \subset [-1, 1] \) is a measurable set with measure $|\Omega|$, then 
	\begin{align}\label{eq-Remez}
		\sup_{[-1,1]} |p| \leq T_N \left( \frac{4 - |\Omega|}{|\Omega|} \right) \sup_{\Omega} |p|,
	\end{align}
	where \( T_N \) is the Chebyshev polynomial of degree \( N\) defined by
	\begin{align*}
		T_N(x) := \cos(N \arccos x),\quad x \in [-1, 1].
	\end{align*}
	This inequality is a foundational tool in approximation theory (see, e.g., \cite{BE1,BE2}) and has been widely generalized.  For instance,  \cite{Friedland2017sp}  extends it to sets $\Omega$ with positive $\delta$-Hausdorff content, where $\delta>0$, and further extensions are discussed in  \cite{HWW2025logtype}.
	Additionally, multidimensional versions  for convex domains in $\mathbb{R}^n$ have been obtained in \cite{BG,Ganzburg,Kroo}.

	Recently,  Dicke and Veseli\'c \cite{DV2024sphere} extended \eqref{eq-Remez} to the unit sphere $\mathbb{S}^{n-1}$ for spherical polynomials -- that is, the restrictions of polynomials to the sphere (see also the earlier work \cite{Ganzburg}). Specifically, let $p$ be a spherical polynomial of degree at most $N\in \mathbb{Z}^+$ and let $M\subset \mathbb{S}^{n-1}$ be any measurable subset satisfying
	$|M \cap K|>0$, where
	$K=K(x,a)=\{y \in \mathbb{S}^{n-1} : d_s(x, y)=  \arccos(x \cdot y) \leq \pi a,\ x \in \mathbb{S}^{n-1}\}$
	is a spherical cap with radius $a>0$, then
	\begin{align}\label{eq-Remez-measure}
		\sup_{K} |p| \leq \left( c \cdot \frac{|K|}{|K \cap M|} \right)^{2N} \sup_{M \cap K} |p|.
	\end{align}
	Furthermore, Dicke and Veseli\'c applied inequality \eqref{eq-Remez-measure}  to  establish  the following observability inequality for the heat equation on the sphere:  there exists a constant $C_{\text{obs}}>0$ such that, for any initial data $u_0 \in L^2(\mathbb{S}^{n-1})$ and  any $T > 0$, 
	\begin{align}\label{eq-obs-sphe}
		\| e^{T\Delta_{\mathbb{S}^{n-1}}} u_0 \|_{L^2(\mathbb{S}^{n-1})}\leq C_{\text{obs}} \left( \int_0^T \int_E | (e^{t\Delta_{\mathbb{S}^{n-1}}} u_0)(x) |^2 \, dx \, dt \right)^{1/2},
	\end{align}
	holds provided that  $E\subset \mathbb{S}^{n-1}$  is  \(\gamma\)-thick, i.e.,  there exists some radius \(a > 0\) such that \(|E \cap K| \geq \gamma |K|\) for all spherical caps \(K\) of radius \(a\). In particular, any measurable set $E\subset \mathbb{S}^{n-1}$ with positive Lebesgue measure is $\gamma$-thick with $\gamma=|E|/|\mathbb{S}^{n-1}|$.
	
	As a form of quantitative unique continuation, the observability inequality is instrumental in control theory and inverse problems and has been extensively studied in the literature .
	Owing to the vast body of literature on this subject, we only
	refer to \cite{Egidi,Car1,Car2,Lebeau,phung,Wang2019observable,Wang26} that are most relevant to our study. 
	In a recent contrasting development,  Burq and Moyano \cite{Burq} established a new observability inequality on compact manifolds, where the observation region $E$ is allowed to have zero measure.
	More precisely, it was shown that for a $W^{2,\infty}$ compact manifold $(\Omega, g)$ of dimension $d\ge 1$, if \( E \) is a subset of positive \( (d - 1 + \delta) \)-Hausdorff content for some \( \delta \in (0, 1) \) close to 1, 
	then for any $u_0\in L^2(\Omega)$ and any $T>0$, there is a positive constant \( C_{\text{obs}}=C(T, d, \Omega, \delta)>0\) such that
	\begin{align}\label{eq-obs-fractal}
		\|e^{T\Delta_g}u_0\|_{L^2(\Omega)} \leq C_{\text{obs}} \int_0^T \sup_{x \in E} |(e^{t\Delta_g}u_0)(x)| \, dt.
	\end{align}
	However, the result does not cover the scenario where $\delta$ is near 0 for some technical reasons (see Remark \ref{remark-W} (ii)). More recently, \cite{2025heatequation,HWW2025logtype} studied bounded domains \(\Omega\subset \mathbb{R}^n\) with $\delta$ arbitrarily chosen in \( (0, 1)\).

	\textbf{Aim and Motivation.}  Motivated by the above results, the aim of this paper is twofold:
	
	$(i)$ We investigate the validity of \eqref{eq-Remez-measure} when $M\subset\mathbb{S}^{n-1}$ ($n\ge 2$) is a fractal set of positive $(n-2+\delta)$-Hausdorff content, where \(\delta \in (0, 1)\) can be  chosen arbitrarily. 
	
	$(ii)$ By applying the fractal version of  \eqref{eq-Remez-measure},  we establish sharp observability inequalities of the form \eqref{eq-obs-fractal} on the sphere, again with \(\delta \in (0, 1)\)  arbitrarily.  
	Moreover, we prove a lower dimensional observability inequality for the heat equation with confining  potentials $V(x)=|x|^{2m}$  ($m \in \mathbb{Z}^+,m\ge 2$) on the whole space $\mathbb{R}^{n}$. 
	
	\subsection{Main results}\label{sec-main-result}
	
	
	Our first result is stated as follows.
	
	\begin{theorem}\label{THM-smallness on the sphere}
		Let $\delta\in (0,1)$ and let $M\subset \mathbb{S}^{n-1}$  ($n\ge 2$)  be a closed subset satisfying  $C_\mathcal{H}^{n-2+\delta}(M)>0$. 
		Then there exist absolute constants $C_1>0$ and  $C_2=C_2(M,n,\delta)>0$ such that, for every  spherical polynomial $p$ of degree at most $N\in \mathbb{Z}^+$, the following inequality holds
		\begin{align}\label{eq-remez-sphere-whole}
			\sup_{\mathbb{S}^{n-1}} |p|\le C_1 \left ( \left ( \frac{C_2  }{C_\mathcal{H}^{n-2+\delta}(M)}\right )^{\frac{2}{\delta}} N^{\frac{4}{\delta}-1} \right )^{2N}\sup_{M} |p|.
		\end{align}
		
	\end{theorem}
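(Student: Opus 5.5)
The plan is to reduce the spherical problem to one-dimensional fractal Remez inequalities along great circles, and then use an integral-geometric slicing argument. The slicing turns positive $(n-2+\delta)$-content on $\mathbb{S}^{n-1}$ into positive $\delta$-content on many one-dimensional slices.

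\textbf{Step 1 (one-dimensional tool).} Let $p$ be a spherical polynomial of degree $\le N$ and let $\gamma$ be a great circle parametrized by $\theta\in[0,2\pi)$. Then $q(\theta)=p(\gamma(\theta))$ is a trigonometric polynomial of degree $\le N$. I would use the fractal Remez inequality of \cite{Friedland2017sp} for sets of positive $\delta$-Hausdorff content, transferred to trigonometric polynomials. One way to transfer it is via $z=e^{i\theta}$, writing $e^{iN\theta}q$ as an algebraic polynomial of degree $2N$; alternatively, restrict to an arc and substitute $x=\cos\theta$ on half-arcs. This should give
\[
\sup_{\gamma}|p|\le \Big(\frac{C}{C^{\delta}_{\mathcal H}(\gamma\cap M)}\Big)^{2N/\delta}\sup_{\gamma\cap M}|p| .
\]
The same bound holds with $\gamma$ replaced by a subarc, with the content normalized to the arc length.

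\textbf{Step 2 (slicing).} Fix a point $x_0\in\mathbb{S}^{n-1}$ with $|p(x_0)|=\sup|p|$. For $n=2$ we are done directly. For $n\ge3$ I would foliate by great circles. A natural choice is the family of great circles through a pole $e$, i.e. meridians, indexed by $\omega\in\mathbb{S}^{n-2}$. The key lemma is a Frostman/Eilenstein-type slicing estimate: if $C^{n-2+\delta}_{\mathcal H}(M)=\eta>0$, then the set of $\omega$ with $C^{\delta}_{\mathcal H}(M\cap\gamma_\omega)\ge c\,\eta$ has $(n-2)$-content at least $c\,\eta$. This follows from the standard Marstrand-type inequality $\int^* C^{\delta}_{\mathcal H}(M\cap\gamma_\omega)\,d\omega\gtrsim C^{n-2+\delta}_{\mathcal H}(M)$ (up to constants depending on the pole distance, absorbed into $C_2(M,n,\delta)$). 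Choosing $e$ so that the poles stay at definite distance from a good portion of $M$ is where the dependence of $C_2$ on $M$ enters.

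\textbf{Step 3 (propagating to all of the sphere).} On each good meridian $\gamma_\omega$, Step 1 yields
\[
\sup_{\gamma_\omega}|p|\le A^{2N}\sup_M|p|, \qquad A=(C_2/\eta)^{1/\delta}.
\]
Let $G$ be the union of the good meridians. Its projection $\Omega\subset\mathbb{S}^{n-2}$ has positive measure, at least $\gtrsim\eta$, but possibly only of size comparable to $\eta$. I then need to pass from $G$ to the whole sphere. My first attempt is to fix a great circle $\sigma$ through $x_0$ transversal to the meridians; for instance, a circle at definite angle from $e$, so that each meridian meets it. The set $\sigma\cap G$ has linear measure $\gtrsim\eta$. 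Applying the measure Remez inequality \eqref{eq-Remez} (trigonometric version, or \eqref{eq-Remez-measure} in dimension one) on $\sigma$ gives
\[
|p(x_0)|\le (C/\eta)^{2N}A^{2N}\sup_M|p|.
\]
The loss in the stated exponent $N^{4/\delta-1}$ suggests that the authors instead discretize. They would cover by caps of radius $\sim N^{-2}$ (Bernstein/Markov scale), keeping $\sim N^{2(n-2)}$ pieces, and use content localization at scale $N^{-2}$. This produces a factor $N^{2/\delta}$-type per step; two steps give $N^{4/\delta}$, and the $N^{-1}$ comes from normalization. I would track constants this way if the clean argument above fails.

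\textbf{Main obstacle.} The main obstacle is Step 2: rigorously slicing Hausdorff content (as opposed to measure) on the sphere, with quantitative lower bounds for the content of a positive-measure family of slices. Content is not additive, so I would first replace $M$ by a Frostman measure $\mu$ with $\mu(B_r)\le r^{n-2+\delta}$ and $\mu(M)\gtrsim\eta$. I would then disintegrate $\mu$ along the meridians, and show by Fubini that the conditional measures are $\delta$-Frostman with bounded constant on a set of $\omega$ of positive measure. Any failure of uniform Frostman bounds would be handled by truncating at scale $N^{-2}$, where the polynomial is essentially constant by Bernstein's inequality. That truncation is the likely source of the polynomial-in-$N$ factor in \eqref{eq-remez-sphere-whole}.
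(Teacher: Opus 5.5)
Your route differs from the paper's. The paper chooses, for the given $p$, a point $m\in M$ and a piece $W=M\cap K(m,\tilde r)$ on which $|p|\ge\frac{c_0}{2}\|p\|_{L^\infty}$. It then uses the Grassmannian-averaged slicing inequality of Lemma \ref{lemma-slice} to find a \emph{single} plane slice meeting $W$ with $C_{\delta/2}(M\cap\tilde I_a)>0$, and applies the fractal Tur\'an lemma \eqref{eq-Turan-YH} with $\alpha=\delta/2$ on that one circle. The factor $N^{4/\delta-1}$ comes from that lemma, not from a discretization at scale $N^{-2}$.

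Your skeleton is viable: a one-dimensional bound on a $p$-independent, positive-measure family of circles, followed by a measure Remez inequality. Step 1 is correct; Bernstein--Walsh together with $\operatorname{cap}(E)\gtrsim_\delta (C^{\delta}_{\mathcal H}(E))^{1/\delta}$ even avoids any $N^{cN}$ loss. However, Steps 2 and 3 fail as written.

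\textbf{Step 2.} For a \emph{fixed} pole $e$, the inequality $\int^* C^{\delta}_{\mathcal H}(M\cap\gamma_\omega)\,d\omega\gtrsim C^{n-2+\delta}_{\mathcal H}(M)$ is false. For a fixed family of slices the standard result is Eilenberg's \emph{upper} bound; lower bounds such as Lemmas \ref{lemma-slice} and \ref{lemma-Ma-10.8} need averaging over $V\in G(n,2)$.

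A counterexample for $n=3$: in the gnomonic chart centred at $c\perp e$, the meridians through $e$ become parallel lines. Hausdorff dimension is not additive on products, so there are compact Cantor sets $Z,D\subset\mathbb{R}$ with $\dim_H Z<1$ but $\dim_H(Z\times D)>1+\delta$. (Alternate long blocks of scales in which one factor keeps all base-$b$ digits and the other only $b^{t}$ of them, $t\in(\delta,1)$.) Pulling $Z\times D$ back gives a closed $M$ at positive distance from $\pm e$ with $C^{1+\delta}_{\mathcal H}(M)>0$ that almost every meridian misses. Your Frostman disintegration breaks for the same reason, since the image of $\mu$ in the $\omega$-variable can be singular. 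Truncating at scale $N^{-2}$ cannot fix this.

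The repair is to apply Lemma \ref{lemma-Ma-10.8} with $s=n-2+\delta/2$; $C_s(M)>0$ because a Frostman measure for the $(n-2+\delta)$-content has finite $s$-energy. This gives some $V\in G(n,2)$, $j$ and $\epsilon$ such that $\Omega=\{a\in V^\perp:\ |a|\le 1-\epsilon,\ C_{\delta/2}(M\cap V_a)>1/j\}$ has positive $\mathcal H^{n-2}$-measure. The slices $\mathbb{S}^{n-1}\cap V_a$ are (small) circles on which $p$ is still a trigonometric polynomial of degree $\le N$. By Lemma \ref{lemma-HWW}(i) they carry $\delta/2$-content at least $1/(2j)$. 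You get $\delta/2$ instead of $\delta$ and no bound of the form $c\eta$, but this is harmless because $C_2$ may depend on $M$.

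\textbf{Step 3.} For $n\ge 4$ your transversal circle does not exist. The circle $\sigma=\mathbb{S}^{n-1}\cap Q$ meets the meridian $\mathbb{S}^{n-1}\cap\operatorname{span}(e,\omega)$ only if $Q\cap\operatorname{span}(e,\omega)\neq\{0\}$. That happens only for $\omega$ in a one-dimensional subset of $\mathbb{S}^{n-2}$, which a positive-measure $\Omega$ may avoid entirely. Even for $n=3$, $x_0$ can lie arbitrarily close to $\pm e$, which destroys the lower bound on $|\sigma\cap G|$.

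Instead, observe that the union $G$ of the good circles has positive surface measure. In the coordinates $x=a+\sqrt{1-|a|^2}(\cos\theta\,e_1+\sin\theta\,e_2)$, with $(e_1,e_2)$ an orthonormal basis of $V$, surface measure is a constant multiple of $da\,d\theta$. Then apply \eqref{eq-Remez-measure} with the cap equal to all of $\mathbb{S}^{n-1}$. This yields $\sup_{\mathbb{S}^{n-1}}|p|\le \Lambda(M,n,\delta)^{N}\sup_M|p|$. That implies \eqref{eq-remez-sphere-whole}, since $N^{4/\delta-1}\ge 1$ and $C_2$ may depend on $M$.

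\textbf{Comparison.} The repaired route makes every choice depend only on $M$. In the paper, $m$, $\tilde r$, $W$ and $\tilde I_a$ depend on $p$. Moreover, $c_0$ in \eqref{eq-m} comes from compactness of the unit sphere of $P_N(\mathbb{S}^{n-1})$, so it a priori depends on $N$.
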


	As applications of Theorem \ref{THM-smallness on the sphere}, we derive the observability inequalities for the parabolic equation
	\begin{align}\label{eq-heat}
		\begin{cases} 
			\partial_t u(t, x) + H u(t, x)=0, & t > 0, \; x \in \Omega, \\ 
			u(0, \cdot) \in L^2(\Omega),
		\end{cases}
	\end{align}
	in the following two settings:
	\begin{itemize}
		\item  {\textit{Case 1:}} $\Omega=\mathbb{S}^{n-1}$ and $H=-\Delta_{\mathbb{S}^{n-1}}$.
		
		\item {\textit{Case 2:}} $\Omega=\mathbb{R}^{n}$ and $H=-\Delta + |x|^{2m}$, where $m \in \mathbb{Z}^+,m\ge 2$.
	\end{itemize}

	More precisely, in the compact setting (\textbf{Case 1}), we establish the following result.
	
	\begin{theorem}\label{THM-observability inequality2}
		In {\textit{Case 1}}, fix any $\delta\in (0,1)$ and let $M\subset \mathbb{S}^{n-1}$ be a closed subset satisfying $C_\mathcal{H}^{n-2+\delta}(M)>0$. Then there exists  a constant 
		$C_{obs}=C(T, n, \delta,M)>0$ such that for every \( u_0 \in L^2(\mathbb{S}^{n-1})\) and \( T > 0 \),
		\begin{align}\label{eq-obs-sphere}
			\left\| e^{T\Delta_{\mathbb{S}^{n-1}}} u_0 \right\|_{L^2(\mathbb{S}^{n-1})} \leq C_{obs} \int_0^T \sup_{x\in M} |(e^{t\Delta_{\mathbb{S}^{n-1}}} u_0)(x)| dt.
		\end{align}
		
	\end{theorem}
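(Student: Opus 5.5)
The plan is the Lebeau--Robbiano / Phung--Wang scheme: a spectral inequality from Theorem \ref{THM-smallness on the sphere}, then a telescoping argument in time.

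\textbf{Step 1: spectral inequality.} The eigenvalues of $-\Delta_{\mathbb{S}^{n-1}}$ are $k(k+n-2)$, with eigenspaces the spherical harmonics of degree $k$. So for $\Lambda\ge1$, every $\phi\in E_\Lambda:=\mathrm{span}\{\text{eigenfunctions with eigenvalue}\le\Lambda\}$ is a spherical polynomial of degree $N\le \sqrt{\Lambda}$. Combining $\|\phi\|_{L^2}\le|\mathbb{S}^{n-1}|^{1/2}\sup|\phi|$ with \eqref{eq-remez-sphere-whole} gives
\begin{align*}
\|\phi\|_{L^2(\mathbb{S}^{n-1})}\le C_1|\mathbb{S}^{n-1}|^{1/2}\Big(\big(\tfrac{C_2}{C_\mathcal{H}^{n-2+\delta}(M)}\big)^{2/\delta}N^{4/\delta-1}\Big)^{2N}\sup_M|\phi|\le e^{C(1+\sqrt{\Lambda}\log(1+\Lambda))}\sup_M|\phi|.
\end{align*}
The constant $C$ depends on $M,n,\delta$. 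The cost is $e^{C\sqrt\Lambda\log\Lambda}$, which is still sub-linear in $\Lambda$, namely $\le e^{C_\epsilon\Lambda^{1/2+\epsilon}}$. That suffices for the iteration.

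\textbf{Step 2: interpolation on a time interval.} Take an interval $(\ell_{k+1},\ell_k)$ of length $\tau_k$, and decompose $u(t)=\Pi_\Lambda u(t)+(I-\Pi_\Lambda)u(t)$. For $t$ in the interval, apply Step 1 to $\Pi_\Lambda u(t)$ and use $\sup_M|\Pi_\Lambda u|\le\sup_M|u|+\sup_{\mathbb{S}^{n-1}}|(I-\Pi_\Lambda)u(t)|$. The $L^\infty$ norm of the high-frequency part is controlled by Sobolev embedding $H^s\hookrightarrow L^\infty$ with $s>(n-1)/2$ and heat smoothing. This gives
\begin{align*}
\|(I-\Pi_\Lambda)e^{t\Delta}v\|_{L^\infty}\le C(1+t^{-s/2})e^{-t\Lambda/2}\|v\|_{L^2}.
\end{align*}
Since the $L^2$ norm decays along the heat flow, integrate over the second half of the interval. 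This yields
\begin{align*}
\|u(\ell_k)\|_{L^2}\le \frac{e^{C\Lambda^{1/2+\epsilon}}}{\tau_k}\int_{\ell_{k+1}}^{\ell_k}\sup_M|u|\,dt+e^{C\Lambda^{1/2+\epsilon}}C\tau_k^{-s/2}e^{-c\tau_k\Lambda}\|u(\ell_{k+1})\|_{L^2}.
\end{align*}
The low-frequency part also uses $\|\Pi_\Lambda u(\ell_k)\|\le\|\Pi_\Lambda u(t)\|$.

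\textbf{Step 3: telescoping.} Choose $\Lambda$ in terms of $\tau_k$ so that the sub-linear cost is beaten. Concretely, take $\Lambda_k=(A/\tau_k)^{2}$, so that $\tau_k\Lambda_k\gg\Lambda_k^{1/2+\epsilon}$. Then, for any $\varepsilon>0$,
\begin{align*}
\|u(\ell_k)\|\le e^{C/\tau_k^{\gamma}}\int_{\ell_{k+1}}^{\ell_k}\sup_M|u|\,dt+\varepsilon\|u(\ell_{k+1})\|\ \text{-type estimate}.
\end{align*}
Here the precise form is $\|u(\ell_k)\|\le e^{C/\tau_k^\gamma}\int\sup_M|u|+e^{-1/\tau_k^{\gamma}}\|u(\ell_{k+1})\|$ for some $\gamma>0$.

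Next take $\ell_k=T/2^k$, so $\tau_k=T2^{-k-1}$. Multiply by $e^{-C'/\tau_{k}^\gamma}$ with $C'$ slightly larger than $C$, so that the inequalities telescope. Summing over $k$ yields
\begin{align*}
\|u(T)\|\le C_{obs}\int_0^T\sup_M|u|\,dt.
\end{align*}
One also has to take care that $\sup_M|u(t)|$ is finite and measurable in $t$. This holds because $u(t)$ is smooth for $t>0$.

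\textbf{Main obstacle.} The main difficulty is the logarithmic loss $N^{(4/\delta-1)2N}$ in Step 1. It makes the spectral cost $e^{C\sqrt\Lambda\log\Lambda}$ rather than the standard $e^{C\sqrt\Lambda}$. One must verify that the telescoping still closes, which works because any exponent below $1$ is enough. A secondary difficulty is the $L^\infty$ observation in place of $L^2$. This is handled by the Sobolev/heat-kernel bound $\|e^{t\Delta}\|_{L^2\to L^\infty}\lesssim t^{-(n-1)/4}$ for small $t$. Its polynomial loss is absorbed into the exponential factors.
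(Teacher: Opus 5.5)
Your route is the paper's: the spectral inequality \eqref{eq-spec-sphere}, obtained from Theorem \ref{THM-smallness on the sphere} with $N=\sqrt{\lambda}$, followed by a Lebeau--Robbiano iteration. The paper only sketches the iteration and defers it to Burq--Moyano. Your Steps 1 and 2 are correct. Step 3, however, fails as written, and it fails at exactly the point you flagged as the main obstacle.

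\textbf{The error in Step 3.} With $\Lambda_k=(A/\tau_k)^2$ you have
\[
\tau_k\Lambda_k=A^2\tau_k^{-1}, \qquad \Lambda_k^{1/2+\epsilon}=A^{1+2\epsilon}\tau_k^{-1-2\epsilon}.
\]
Their ratio is $\Lambda_k^{1/2+\epsilon}/(\tau_k\Lambda_k)=A^{2\epsilon-1}\tau_k^{-2\epsilon}\to\infty$. So the claimed $\tau_k\Lambda_k\gg\Lambda_k^{1/2+\epsilon}$ is reversed for small $\tau_k$. Even with the sharper cost $e^{C\sqrt{\Lambda}\log\Lambda}$, the coefficient of $\|u(\ell_{k+1})\|$ is, up to polynomial factors,
\[
\exp\Big(\frac{2CA\log(A/\tau_k)-cA^2}{\tau_k}\Big),
\]
which tends to $\infty$ as $k\to\infty$ for every fixed $A$. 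The quadratic choice is the classical one for a cost $e^{C\sqrt{\Lambda}}$. It is borderline, and the logarithm breaks it. So your ``precise form'' does not follow from your choice.

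Independently, that form has the same power $\gamma$ in both exponentials. Multiplying by $e^{-C'\tau_k^{-\gamma}}$ then telescopes only if $C'+1\ge 2^{\gamma}C'$. Together with $C'\ge C$, this forces $C(2^\gamma-1)\le 1$, which you do not control.

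\textbf{The repair.} Take $\Lambda_k=(A/\tau_k)^{\beta}$ with $\beta>2/(1-2\epsilon)$, for instance $\beta=3$ and $\epsilon<1/6$.
\begin{itemize}
\item The observation coefficient, including the factor $2/\tau_k$, is at most $e^{C\tau_k^{-\gamma_1}}$ with $\gamma_1=\beta(\tfrac12+\epsilon)$.
\item The remainder coefficient, after absorbing $\tau_k^{-s/2}$, is at most $e^{-c\tau_k^{-\gamma_2}}$ with $\gamma_2=\beta-1>\gamma_1$.
\end{itemize}
Set $a_k:=e^{-C\tau_k^{-\gamma_1}}$ with $\tau_{k+1}=\tau_k/2$. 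Since $\gamma_2>\gamma_1$, we have $c\tau_k^{-\gamma_2}\ge C(2^{\gamma_1}-1)\tau_k^{-\gamma_1}$ for $k\ge k_0$. This gives, for $k\ge k_0$,
\[
a_k\|u(\ell_k)\|-a_{k+1}\|u(\ell_{k+1})\|\le \int_{\ell_{k+1}}^{\ell_k}\sup_M|u|\,dt .
\]
Sum over $k\ge k_0$, using $a_k\to0$ and $\|u(\ell_k)\|\le\|u_0\|$. Then $\|u(T)\|\le\|u(\ell_{k_0})\|$ gives \eqref{eq-obs-sphere} with $C_{obs}=a_{k_0}^{-1}$.
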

	
	Next, we turn to the non-compact setting {\textbf{Case 2}}, we need the following geometric setup. Let 
	\begin{equation}\label{eq-UN-har-526-0}
		\Gamma_1 := \{ x \in \mathbb{R}^n: r_0\le|x|\le r_1, x/|x| \in \Theta_1 \},\ \ 0<r_0<r_1,
	\end{equation}
	where 
	\begin{equation}\label{eq-UN-har-526-1}
		\Theta_1\subset\mathbb{S}^{n-1}\,\,\text{is a closed subset and}\,\,C_\mathcal{H}^{n-2+\delta}(\Theta_1)>0.
	\end{equation}
	\begin{theorem}\label{THM-observability inequality3}
		In {\textit{Case 2}},  fix any $\delta\in (0,1)$ in \eqref{eq-UN-har-526-1}. Then there exists  a constant  $C_{obs}=C(T, n, m, \delta, \Theta_1)>0$ such that for every \( u_0 \in L^2(\mathbb{R}^{n}) \) and  \( T > 0 \),
		\begin{align}\label{eq-obs-ineq-2m}
			\left\| e^{-TH} u_0 \right\|_{L^2(\mathbb{R}^n)} \leq C_{obs} \int_0^T \left ( \int_{r_0}^{r_1}\left ( \sup_{\omega\in \Theta_1}|(e^{-tH} u_0)(r,\omega)| \right )^2 dr  \right )^{\frac{1}{2}}dt.
		\end{align}
	\end{theorem}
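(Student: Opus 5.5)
The strategy is the standard Lebeau--Robbiano/Miller scheme. We first prove a spectral inequality adapted to the operator $H=-\Delta+|x|^{2m}$, and then upgrade it to an observability estimate by a telescoping argument.

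\textbf{Step 1: spectral decomposition.} Write $H$ in polar coordinates as
\[
H=-\partial_r^2-\tfrac{n-1}{r}\partial_r+r^{-2}(-\Delta_{\mathbb{S}^{n-1}})+r^{2m}.
\]
Decompose $L^2(\mathbb{R}^n)=\bigoplus_k L^2(r^{n-1}dr)\otimes\mathcal{H}_k$, where $\mathcal{H}_k$ denotes the spherical harmonics of degree $k$. On each $\mathcal{H}_k$, $H$ acts as a radial operator $H_k$ whose bottom eigenvalue grows like $k^{2m/(m+1)}$. Concretely, $r^{-2}k^2+r^{2m}\gtrsim k^{2m/(m+1)}$ after optimizing in $r$, and this is where $m\ge 2$, or more generally superquadratic growth, is used.

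Consequently, every $f$ in the spectral subspace $\mathbf{1}_{H\le\Lambda}L^2$ has the form $f(r,\omega)=\sum_{k\le K(\Lambda)}\sum_j a_{kj}(r)Y_{kj}(\omega)$ with $K(\Lambda)\lesssim\Lambda^{(m+1)/(2m)}$. Thus, for each fixed $r$, $\omega\mapsto f(r,\omega)$ is a spherical polynomial of degree at most $K\sim\Lambda^{(m+1)/(2m)}$.

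\textbf{Step 2: spectral inequality.} Apply Theorem \ref{THM-smallness on the sphere} for each $r\in[r_0,r_1]$. This gives
\[
\sup_{\mathbb{S}^{n-1}}|f(r,\cdot)|\le e^{CK\log K}\sup_{\Theta_1}|f(r,\cdot)|.
\]
Squaring and integrating over $[r_0,r_1]$ yields
\[
\int_{r_0}^{r_1}\|f(r,\cdot)\|_{L^2(\mathbb{S}^{n-1})}^2\,dr\le e^{C\Lambda^{s}\log\Lambda}\int_{r_0}^{r_1}\sup_{\Theta_1}|f(r,\cdot)|^2\,dr,\qquad s=\tfrac{m+1}{2m}<1.
\]
(The factor $r^{n-1}$ is harmless since $r\in[r_0,r_1]$.) It then remains to control $\|f\|_{L^2(\mathbb{R}^n)}$ by $\|f\|_{L^2(\{r_0<|x|<r_1\})}$ for $f\in\mathbf{1}_{H\le\Lambda}L^2$, with cost $e^{C\Lambda^{s'}}$ for some $s'<1$. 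For this I would invoke the known spectral inequality for $-\Delta+|x|^{2m}$ on thick sets (Dicke--Seelmann--Veseli\'c, or Zhu--Zhuge type results). The annulus is not thick, so I would instead use a quantitative unique continuation/propagation of smallness estimate for the elliptic function $F(x,y)=\sum_\lambda \cosh(\sqrt{\lambda}y)\,c_\lambda\phi_\lambda$, which solves $(-\partial_y^2+H)F=0$. Propagating smallness from the annulus to balls of radius $R\sim\Lambda^{1/(2m)}$ costs $e^{C\Lambda^{s''}}$, and outside that ball Agmon decay of low-energy functions gives smallness. This localization step is the main obstacle: one must check that the total exponent stays strictly below $1$ (e.g. $\Lambda^{(m+1)/(2m)}\log\Lambda$ plus the propagation cost), which is exactly why $m\ge2$ is required.

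\textbf{Step 3: observability.} With a spectral inequality $\|f\|\le e^{C\Lambda^{\theta}}\,\|f\|_{\mathcal{O}}$, where $\theta<1$ and $\|g\|_{\mathcal{O}}=\bigl(\int_{r_0}^{r_1}\sup_{\Theta_1}|g(r,\cdot)|^2\,dr\bigr)^{1/2}$, I would run the Phung--Wang/Burq--Moyano telescoping argument on dyadic time intervals. Dissipation $\|\mathbf{1}_{H>\Lambda}e^{-tH}\|\le e^{-t\Lambda}$ beats the cost $e^{C\Lambda^\theta}$. The high-frequency part enters the observation through a sup norm, which requires a smoothing bound
\[
\|e^{-tH}g\|_{\mathcal{O}}\le Ct^{-\kappa}\|g\|_{L^2};
\]
I would obtain it from Sobolev embedding on the compact annulus together with the parabolic estimate $\|H^je^{-tH}\|\le (j/t)^j$. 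Because the observation quantity is a seminorm rather than an $L^2$ norm, I would follow the $L^1$-in-time version from Burq--Moyano. This yields the inequality \eqref{eq-obs-ineq-2m} with $C_{obs}=C\exp(C/T^{\theta/(1-\theta)})$.
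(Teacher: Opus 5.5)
Your architecture matches the paper's proof. You bound the spherical-harmonic degree on $\mathbf{1}_{H\le\Lambda}L^2$ by $\Lambda^{\frac{m+1}{2m}}$, apply Theorem \ref{THM-smallness on the sphere} on each sphere $|x|=r$, pass from the annulus to $\mathbb{R}^n$, and run Lebeau--Robbiano with an $L^2\to L^\infty$ smoothing bound for the high frequencies. Your Step 1 bound, $\ell(\ell+n-2)r^{-2}+r^{2m}\gtrsim \ell^{2m/(m+1)}$ via the quadratic form, is actually cleaner than the paper's. The paper extracts $\ell_{max}$ from the asymptotics \eqref{eq-eigen-2m}, which are only stated as $k\to\infty$.

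The gap is the step you yourself call the main obstacle, namely
\[
\|f\|_{L^2(\mathbb{R}^n)}\le Ce^{C\Lambda^{\frac{m+1}{2m}}\log\Lambda}\,\|f\|_{L^2(\{r_0\le|x|\le r_1\})},\qquad f\in\mathbf{1}_{H\le\Lambda}L^2(\mathbb{R}^n).
\]
Your proposal neither proves this nor sources it correctly, and without it Steps 2 and 3 do not close. The paper does not prove it either. It invokes \cite[Theorem 2.1]{Martin}, which gives exactly this estimate for any measurable set of positive Lebesgue measure. That covers the annulus, even though it is not thick. The exponent $\frac{m+1}{2m}$ is $<1$ precisely because $m\ge 2$.

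Your proposed substitute is not a routine verification. The exterior part is trivial and needs no Agmon estimate: $\int_{|x|>R}|f|^2\le R^{-2m}\Lambda\|f\|^2$. The hard part is propagating smallness from a fixed annulus to $B_R$ with $R\sim\Lambda^{1/(2m)}$.

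Naively iterating three-ball inequalities along a chain of $\sim R$ unit balls multiplies the H\"older exponents. The final exponent degenerates like $\alpha^{R}$, and the constant is of size $\exp(e^{c\Lambda^{1/(2m)}})$, which is useless for Lebeau--Robbiano.

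One has to work at a single scale instead. Rescale $B_R\times(-R,R)$ to the unit ball, where the potential becomes $\tilde V=R^{2m+2}|x|^{2m}$. Then you need a Carleman or doubling estimate whose frequency is of order $\|\tilde V\|_{W^{1,\infty}}^{1/2}\sim R^{m+1}=\Lambda^{\frac{m+1}{2m}}$, plus a $\log R$ loss for starting from a set of relative size $1/R$. The generic bounded-potential bound $\|\tilde V\|_\infty^{2/3}$ gives the exponent $\frac{2(m+1)}{3m}$, which equals $1$ when $m=2$. So "the total exponent stays below $1$" is not automatic; it requires exploiting the smoothness of $V$.

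Either carry this out, or cite Martin's spectral inequality as the paper does. With that in place, the rest of your argument coincides with the paper's.
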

	
	The following remarks related to Theorems \ref{THM-smallness on the sphere}--\ref{THM-observability inequality3} are given, respectively.

	\begin{remark}
		(i) Theorem \ref{THM-smallness on the sphere} can be viewed as a fractal analogue of \eqref{eq-Remez-measure}. We mention that \eqref{eq-Remez-measure} yields the   following Logvinenko-Sereda-Kovrijkine type inequality on $\mathbb{S}^{n-1}$: for every \(\gamma\)-thick set \(E \subset \mathbb{S}^{n-1}\) with \(\gamma \in (0, 1]\), every \(\lambda \geq 1\), and every \(f \in \text{Ran } P_{-\Delta_{\mathbb{S}^{n-1}}}((-\infty, \lambda])\), we have
		\begin{equation}\label{eq-LS-Sphe}
			\|f\|_{L^2(\mathbb{S}^{n-1})} \leq \left(\frac{c_1}{\gamma}\right)^{R\lambda^{1/2}+1/2} \|f\|_{L^2(E)}.    
		\end{equation}
		A closely related problem was previously investigated in \cite{Ortega1,Ortega2}. However, the primary objective in those works was to establish uniform estimates with constants independent of  the spectral parameter $\lambda$,  a goal achieved at the cost of imposing stronger assumptions on the sets $E$. 
		In contrast, our approach here does not pursue such uniformity; instead, we derive explicit quantitative bounds that track the precise dependence on  $\lambda$. It plays a crucial role in the application of the well-known Lebeau--Robbiano strategy \cite{Lebeau}, where such precision is essential for the iteration scheme.
		
		(ii) In the spherical setting, Theorem \ref{THM-observability inequality2} improves the result of Burq and Moyano \cite[Theorem 3]{Burq}. Moreover, the condition $C_\mathcal{H}^{n-2+\delta}(M)>0$ is optimal. To see this, consider
		\begin{align*}
			E := \{ x \in \mathbb{S}^{n-1} : \phi_\lambda(x) = 0 \},
		\end{align*}
		where $\phi_\lambda$ is an eigenfunction of $\Delta_{\mathbb{S}^{n-1}}$ on $\mathbb{S}^{n-1}$ associated with the eigenvalue $\lambda > 0$. It is well known that $C_\mathcal{H}^{n-2}(E)>0$,  yet inequality \eqref{eq-obs-sphere} fails for this choice of $E$. 
		Whether Theorem \ref{THM-observability inequality2} extends to arbitrary compact $W^{2,\infty}$ manifolds $(\Omega, g)$ remains an open question.
		
		(iii) To the best of the authors' knowledge, Theorem \ref{THM-observability inequality3} provides the first observability inequality for heat equations with potentials that allows the observation region to be lower-dimensional. In the setting of {\textit{Case 2}},
		it was established in \cite{cone,Miller2008cones} that the conical set
		\begin{equation}\label{eq-cone-0}
			\Gamma_0 := \{ x \in \mathbb{R}^n: |x|\ge r_0, x/|x| \in \Theta_0 \}
		\end{equation}
		is an observable set for \eqref{eq-heat}, where \( r_0 > 0 \) and 
		$\Theta_0\subset \mathbb{S}^{n-1}$ is a nonempty and open. 
		Recently, Martin \cite{Martin} proved that any measurable set with positive measure is an observable set for \eqref{eq-heat}. It is worth noting that the situation is quite different for the case $m=1$, corresponding to Hermite operators, we refer to \cite{AS,JamingSpectralestimates2021,DSV,MP} for further details.
	\end{remark}

	\begin{remark} \label{remark-W}
		(i) Both the proofs of Theorem \ref{THM-observability inequality2} and \ref{THM-observability inequality3} rely crucially on Theorem  \ref{THM-smallness on the sphere}. To prove Theorem  \ref{THM-smallness on the sphere}, we employ the following key technical ingredients: (a) we apply a fractal version of the Tur\'{a}n's lemma established in \cite{{YH}}; (b) we use a slicing theorem related to Riesz capacity, which is  inspired by \cite{Malinnikova1,HWW2025logtype}.
		
		(ii) Our proof of Theorem \ref{THM-observability inequality2} differs from the approach in \cite{Burq}, which is based on the following propagation of smallness estimate for gradients of solutions to \(\operatorname{div}(A(x)\nabla u) = 0\) obtained in \cite{Log-Ma}
		\begin{equation}\label{eq-gre-715}
			\sup_{B_1} |\nabla u| \leq \left(\sup_E |\nabla u|\right)^{\beta} \left(\sup_{B_2} |\nabla u|\right)^{1-\beta},
		\end{equation}
		where \(\beta \in (0, 1)\), and \(E\subset B_2\) is a set with positive \((n-2+\delta)\)-Hausdorff content for some \(\delta \in (0, 1)\) close to \(1\). Here \(B_1\subset B_2\subset\mathbb{R}^n\) are balls of given radii.
		However, whether  \eqref{eq-gre-715} remains valid for \(\delta\) arbitrarily close to  $0$ is currently unknown.

		
		(iii)  Theorem \ref{THM-observability inequality3} constitutes a further refinement of  \cite[Theorem 2.1]{Martin}, achieved with the aid of Theorem \ref{THM-smallness on the sphere}. The key observation is that for the operator \( H = -\Delta + V\) with a radial potential $V$ satisfying $\lim_{r\to+\infty}V(r)= +\infty$,  its eigenfunctions can be expressed as a linear combination of spherical harmonics. This allows us to apply Theorem \ref{THM-smallness on the sphere}, see Subsection \ref{sec-2m} for details.
	\end{remark}

	The remainder of this paper is organized as follows. Section \ref{sec2} is devoted to the proof of Theorem~\ref{THM-smallness on the sphere}. In Section \ref{sec3}, we provide the proofs of Theorems~\ref{THM-observability inequality2}--\ref{THM-observability inequality3}.  Finally, in Appendix~A, we supply the proof of  the asymptotic formula \eqref{eq-eigen-2m} in Lemma \ref{lemma-spectral}.

	\section{Proof of Theorem \ref{THM-smallness on the sphere}}\label{sec2}
	
	\subsection{Preliminaries}\label{sec-pre}\,
	
	This section collects the key concepts and tools that will be used in the proof of Theorem~\ref{THM-smallness on the sphere}. We first recall the definitions of Hausdorff content and Hausdorff measure, which are standard in geometric measure theory (see, e.g., in \cite{Ma,Falconer}). Then we introduce the Riesz $s$-energy and the associated $s$-capacity. Finally, we present slicing theorems for capacity,  which enable us to reduce higher-dimensional problems to lower-dimensional ones.
	
	\begin{definition}[Hausdorff content and measure]
		For $\alpha>0$, the $\alpha$-Hausdorff content of a set $E\subset \mathbb{R}^n$ is defined as 
		\begin{equation}
			C^\alpha_\mathcal{H}(E) := \inf \left\{ \sum_{j=1}^{\infty} d(B_j)^\alpha : E \subset \bigcup_{j=1}^{\infty} B_j \right\}, \quad E \subset \mathbb{R}^n,
		\end{equation}
		where $B_j\subset \mathbb{R}^n$ ($j = 1, 2, \ldots$) are open balls with the diameter \( d(B_j):= diam(B_j) \). The $\alpha$-Hausdorff measure of a set $E\subset \mathbb{R}^n$ is defined as
		\begin{equation}
			\mathcal{H}^\alpha(E) := \lim_{\delta \to 0^+} \inf \left\{ \sum_{j=1}^{\infty} d(B_j)^\alpha : E \subset \bigcup_{j=1}^{\infty} B_j, \, d(B_j) \leq \delta \right\}, \quad E \subset \mathbb{R}^n.
		\end{equation}
	\end{definition}
	It is well-known that the Hausdorff dimension of a set \( A \subset X \) is characterized by the following identities
	\begin{align*}
		\dim_{\mathrm{H}} A :=& \sup\{s : \mathcal{H}^s(A) > 0\} = \sup\{s : \mathcal{H}^s(A) = \infty\}\\
		=& \inf\{t : \mathcal{H}^t(A) < \infty\} = \inf\{t : \mathcal{H}^t(A) = 0\}.
	\end{align*}
	Moreover, it satisfies the monotonicity property
	\begin{equation}
		\dim A \leq \dim B \quad \text{for } A \subset B \subset X,
	\end{equation}
	and the countable stability property
	\begin{equation}
		\dim \bigcup_{i=1}^{\infty} A_i = \sup_i \dim A_i \quad \text{for } A_i \subset X, \, i = 1, 2, \ldots.
	\end{equation}
	
	A basic object in fractal geometry is the $s$-energy of a Borel measure $\mu$: for $s>0$,
	\begin{equation*}
		I_s(\mu) = \iint |x - y|^{-s} \, d\mu x \, d\mu y = \int k_s * \mu \, d\mu,
	\end{equation*}
	where $k_s(x)=|x|^{-s}$ is the Riesz kernel. 
	The following definition of $s$-capacity, formulated in terms of the $s$-energy, will play a crucial role in the subsequent discussion.
	
	\begin{definition}[Riesz $s$-capacity]
		Let \(s > 0\). The (Riesz) \( s \)-capacity of a set \( A \subset \mathbb{R}^n \) is defined by
		\begin{align}
			C_s(A) = \sup \left\{ I_s(\mu)^{-1} : \mu \in \mathcal{M}(A) \text{ with } \mu(\mathbb{R}^n) = 1 \right\},
		\end{align}
		with the interpretation \( C_s(\emptyset) = 0 \), where
		\begin{align}
			\mathcal{M}(A) = \left\{\mu : \mu \text{ is a Radon measure with compact support,} \right. \nonumber\\
			\left. \text{supp } \mu \subset A \text{ and } 0 < \mu(\mathbb{R}^n) < \infty\right\}.
		\end{align}
	\end{definition}
	
	
	
	
	In order to state the slicing theorems, we first recall the notion of the Grassmannian manifold and introduce a natural probability measure on it.
	
	\begin{definition}[Grassmannian manifold and Radon probability measure]
		Let $1 \le m < n$.
		\begin{enumerate}
			\item[(i)] The Grassmannian $G(n,m)$ consists of all $m$-dimensional linear subspaces of $\mathbb{R}^n$. For $V \in G(n,m)$, we denote by $V^\perp \in G(n,n-m)$ its orthogonal complement. For $a \in V^\perp$, set $V_a := V + a$ (the $m$-plane parallel to $V$ through $a$).
			\item[(ii)] The Radon probability measure $\gamma_{n,m}$ on $G(n,m)$ is defined by
			\[
			\gamma_{n,m}(A) := \theta_n\bigl(\{ g \in O(n) : gV \in A \}\bigr) \quad (A \subset G(n,m)),
			\]
			where $O(n)$ is the orthogonal group consisting of all orthogonal transformations on $ \mathbb{R}^n$.  $\theta_n$ is the Haar measure on $O(n)$ such that $\theta_n(O(n))=1$.
		\end{enumerate}
		
		
	\end{definition}
	
	The following slicing lemmas allow us to estimate the capacity of a set in $\mathbb{R}^n$ by integrating the capacities of its intersections with lower-dimensional planes.
	\begin{lemma}(\cite[Chapter 10]{Ma})\label{lemma-slice}
		There is a constant $c$, depending only on $n$ and $m$, such that for $m<s<n$ and for any compact subset $E\subset \mathbb{R}^n$,
		\begin{align}
			c(n,m)C_s(E)\le \int \int_{V^\perp} C_{s-m}(E \cap V_a) \, d\mathcal{H}^{m} a \, d\gamma_{n,n-m} V,
		\end{align}
		where $V\in G(n,n-m)$.
	\end{lemma}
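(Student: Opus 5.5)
The plan is to prove the inequality by slicing a near-optimal measure, following the scheme of \cite[Chapter 10]{Ma}. Let $k:=n-m$, so that $V\in G(n,k)$ and $V^\perp\in G(n,m)$. It suffices to show that for every $\mu\in\mathcal{M}(E)$ with $\mu(\mathbb{R}^n)=1$ and $I_s(\mu)<\infty$,
\begin{align*}
\int\int_{V^\perp} C_{s-m}(E\cap V_a)\,d\mathcal{H}^m a\,d\gamma_{n,k}V\ \ge\ \frac{c(n,m)}{I_s(\mu)},
\end{align*}
and then take the supremum over $\mu$.

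\textbf{Step 1 (slice measures).} Fix such a $\mu$. For $\gamma_{n,k}$-a.e.\ $V$, the projection $P_{V^\perp\#}\mu$ is absolutely continuous with respect to $\mathcal{H}^m$ on $V^\perp$. The reason is that
\begin{align*}
\int\int \mathcal{H}^m\text{-energy of } P_{V^\perp\#}\mu \ \, d\gamma \ \lesssim\ I_m(\mu)\ \lesssim\ I_s(\mu)<\infty,
\end{align*}
using $s>m$ and the compact support of $\mu$. We then define the slices $\mu_{V,a}$ as the weak limits
\begin{align*}
\mu_{V,a}=\lim_{\varepsilon\to0}(2\varepsilon)^{-m}\alpha_m^{-1}\,\mu\llcorner\{x:|P_{V^\perp}x-a|<\varepsilon\},
\end{align*}
which exist for $\mathcal{H}^m$-a.e.\ $a\in V^\perp$. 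Because $E$ is compact, $\operatorname{supp}\mu_{V,a}\subset E\cap V_a$. The disintegration formula gives
\begin{align*}
\int_{V^\perp}\|\mu_{V,a}\|\,d\mathcal{H}^m a=\mu(\mathbb{R}^n)=1,
\end{align*}
and hence $\int\int\|\mu_{V,a}\|\,da\,d\gamma=1$.

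\textbf{Step 2 (energy of slices).} This is the key estimate and the step I expect to be the main obstacle:
\begin{align*}
\int\int_{V^\perp} I_{s-m}(\mu_{V,a})\,d\mathcal{H}^m a\,d\gamma_{n,k}V\ \le\ c'(n,m)\,I_s(\mu).
\end{align*}
The approach is to write $I_{s-m}(\mu_{V,a})$ via the $\varepsilon$-approximations above and apply Fatou's lemma. After Fubini, the problem reduces to the geometric bound
\begin{align*}
\gamma_{n,k}\bigl(\{V:|P_{V^\perp}(x-y)|<\varepsilon\}\bigr)\le C\,\varepsilon^m|x-y|^{-m},
\end{align*}
a standard Grassmannian estimate. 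Substituting it turns the kernel $\varepsilon^{-2m}\cdots|x-y|^{m-s}$, integrated in $a$, into $\lesssim |x-y|^{m-s}\cdot|x-y|^{-m}=|x-y|^{-s}$. Care is needed to restrict to pairs $(x,y)$ that lie in a common slice and to justify the limit interchanges. If Fatou is awkward here, I would instead smooth by a mollifier in the $V^\perp$ direction and pass to the limit by lower semicontinuity of energy.

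\textbf{Step 3 (conclusion).} For every slice with $\|\mu_{V,a}\|>0$, testing the definition of capacity with the normalized measure $\mu_{V,a}/\|\mu_{V,a}\|$ gives
\begin{align*}
C_{s-m}(E\cap V_a)\ge \frac{\|\mu_{V,a}\|^2}{I_{s-m}(\mu_{V,a})}.
\end{align*}
By Cauchy--Schwarz on the product measure $\mathcal{H}^m\times\gamma_{n,k}$,
\begin{align*}
1=\int\int\|\mu_{V,a}\|\le\Bigl(\int\int\frac{\|\mu_{V,a}\|^2}{I_{s-m}(\mu_{V,a})}\Bigr)^{1/2}\Bigl(\int\int I_{s-m}(\mu_{V,a})\Bigr)^{1/2}.
\end{align*}
Combining this with Step 2 yields $\int\int C_{s-m}(E\cap V_a)\ge 1/(c'I_s(\mu))$. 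Taking the supremum over $\mu$ finishes the proof with $c(n,m)=1/c'$. Measurability of $(V,a)\mapsto C_{s-m}(E\cap V_a)$ follows from upper semicontinuity of capacity on compact sets; alternatively, the integral can be read as a lower integral.
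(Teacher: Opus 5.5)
Your proposal is correct and follows the standard argument behind the citation; the paper gives no proof of its own, only the reference to \cite[Chapter 10]{Ma}. As there, you slice a measure $\mu\in\mathcal{M}(E)$ with $I_s(\mu)<\infty$, bound $\int\int I_{s-m}(\mu_{V,a})$ by $C(n,m)I_s(\mu)$ via Fatou and the Grassmannian estimate $\gamma_{n,n-m}(\{V:|P_{V^\perp}z|<\varepsilon\})\lesssim\varepsilon^m|z|^{-m}$, and conclude using $\int\int\|\mu_{V,a}\|=1$, Cauchy--Schwarz, and the test-measure bound $C_{s-m}(E\cap V_a)\ge\|\mu_{V,a}\|^2/I_{s-m}(\mu_{V,a})$. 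The only cosmetic point is that your slice normalization $(2\varepsilon)^{-m}\alpha_m^{-1}$ should be matched to the normalization of $\mathcal{H}^m$ so the disintegration identity holds exactly; this changes only the value of $c(n,m)$.
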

	
	\begin{lemma}(\cite[Chapter 10]{Ma})\label{lemma-Ma-10.8}
		For \( m < s < n \) and \( A \subset \mathbb{R}^n \), if \( C_s(A) > 0 \), then for \( \gamma_{n,n-m} \) almost all \( W \in G(n,n-m) \),
		\begin{align}
			\mathcal{H}^m \bigl( \{ a \in W^\perp : C_{s-m}(A \cap W_a) > 0 \} \bigr) > 0.
		\end{align}
	\end{lemma}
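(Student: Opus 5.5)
The plan is to work with a single measure of finite $s$-energy on $A$ and slice it along the planes $W_a$, rather than slicing capacities directly as in Lemma \ref{lemma-slice}. The reason is that integrating capacities only yields the conclusion for a set of $W$ of positive $\gamma_{n,n-m}$-measure, whereas we need it for almost every $W$.

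\textbf{Step 1 (a measure of finite energy).} Since $C_s(A)>0$, the definition of the Riesz $s$-capacity gives a Radon measure $\mu\in\mathcal{M}(A)$ with $\mu(\mathbb{R}^n)=1$ and $I_s(\mu)<\infty$. Its support $K:=\operatorname{supp}\mu$ is compact and contained in $A$.

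\textbf{Step 2 (projections are absolutely continuous).} Fix $W\in G(n,n-m)$ and let $P_W$ denote the orthogonal projection onto $W^\perp$, an $m$-dimensional space. Because $s>m$, we have $I_m(\mu)\le C(\operatorname{diam}K)\,I_s(\mu)<\infty$. The standard Fourier/energy argument for projections then gives
\[
\int \int \bigl|\widehat{P_{W\#}\mu}\bigr|^2 \, d\mathcal{H}^m \, d\gamma_{n,n-m}W \lesssim I_m(\mu)<\infty .
\]
Hence for $\gamma_{n,n-m}$-a.e. $W$, the projection $P_{W\#}\mu$ is absolutely continuous with respect to $\mathcal{H}^m$ on $W^\perp$, with $L^2$ density $f_W$ satisfying $\int f_W\,d\mathcal{H}^m=1$. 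In particular $\{a: f_W(a)>0\}$ has positive $\mathcal{H}^m$ measure.

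\textbf{Step 3 (sliced measures and their energy).} For such $W$, define the sliced measures $\mu_{W,a}$ on $W_a$ by disintegration, or as weak limits of $\varepsilon^{-m}\mu\llcorner\{x:|P_W x-a|<\varepsilon\}$. They satisfy:
\begin{itemize}
\item $\operatorname{supp}\mu_{W,a}\subset K\cap W_a\subset A\cap W_a$;
\item $\mu_{W,a}(\mathbb{R}^n)=f_W(a)$ for $\mathcal{H}^m$-a.e. $a$.
\end{itemize}
The key estimate, the energy-slicing inequality behind Lemma \ref{lemma-slice} (Mattila, Ch.~10), is
\[
\int\int_{W^\perp} I_{s-m}(\mu_{W,a})\,d\mathcal{H}^m a\,d\gamma_{n,n-m}W \le c(n,m)\, I_s(\mu)<\infty .
\]
To prove it, approximate by the $\varepsilon$-slabs. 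Then use that averaging $\varepsilon^{-m}\chi_{\{|P_W(x-y)|<\varepsilon\}}$ over $W$ is bounded by $C\,|x-y|^{-m}$, so the double integral is bounded by $\iint |x-y|^{-(s-m)}|x-y|^{-m}\,d\mu\,d\mu$. Finally pass to the limit via Fatou's lemma. I expect this step to be the main technical obstacle: it requires the justification of the disintegration and the uniform kernel bound $\int \varepsilon^{-m}\chi_{\{|P_W z|<\varepsilon\}}\,d\gamma W\lesssim |z|^{-m}$.

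\textbf{Step 4 (conclusion).} By Fubini, for $\gamma_{n,n-m}$-a.e. $W$ we have $I_{s-m}(\mu_{W,a})<\infty$ for $\mathcal{H}^m$-a.e. $a$. Intersect this with the full-measure set of $W$ from Step 2. For such $W$ and all $a$ in the positive-measure set $\{f_W>0\}$ (up to a null set), the measure $\mu_{W,a}$ is nonzero and has finite $(s-m)$-energy. Normalizing $\mu_{W,a}$ to a probability measure supported in $A\cap W_a$ then gives $C_{s-m}(A\cap W_a)\ge f_W(a)^2/I_{s-m}(\mu_{W,a})>0$. This proves the claim.
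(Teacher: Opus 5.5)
Your proposal is correct and is essentially the standard proof from Mattila's Chapter~10, which the paper cites without proof. That proof picks $\mu\in\mathcal{M}(A)$ with $I_s(\mu)<\infty$ and uses $I_m(\mu)<\infty$ to get absolute continuity of the projections onto $W^\perp$ for $\gamma_{n,n-m}$-a.e.\ $W$, so the slices $\mu_{W,a}$ are nonzero on a set of $a$ of positive $\mathcal{H}^m$-measure; it then combines this with the energy-slicing inequality $\iint I_{s-m}(\mu_{W,a})\,d\mathcal{H}^m a\,d\gamma_{n,n-m}W\le c\,I_s(\mu)$, proved exactly as you sketch (slab approximation, the bound $\gamma_{n,n-m}(\{W:|P_Wz|<\varepsilon\})\lesssim\varepsilon^m|z|^{-m}$, and Fatou together with lower semicontinuity of the energy under weak convergence). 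The only slip is cosmetic: with slabs weighted by $\varepsilon^{-m}$, the mass of $\mu_{W,a}$ is a dimensional constant times $f_W(a)$ rather than $f_W(a)$ itself, which changes nothing.
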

	
	\subsection{Auxiliary set $W$ and slicing arguments}\label{subsec2.2-W}\,
	
	In this subsection, we construct an auxiliary subset $W \subset \mathbb{S}^{n-1}$, which will be used later in the application of the slicing lemmas. The motivation for introducing $W$ is explained in Remark \ref{remark-construct-W}. By applying the slicing lemma to $W$ in Lemma \ref{lemma-back}, we obtain a distinguished spherical segment $\tilde{I}_a$ satisfying several specific properties. This segment will play a key role in the proof of Theorem \ref{THM-smallness on the sphere}. 
	
	\begin{lemma}\label{lemma-construct-W}
		Let $\delta\in (0,1)$. For any spherical polynomial $p$ of degree at most $N\in \mathbb{Z}^+$ and any closed subset $M\subset \mathbb{S}^{n-1}$ with $C_\mathcal{H}^{n-2+\delta}(M)>0$, there exist a set $\widetilde{M}$ satisfying $C_\mathcal{H}^{n-2+\delta}(\widetilde{M})=0$, a point $m \in M \setminus \widetilde{M}$ and a constant $\tilde{r} > 0$ such that, upon defining
		\begin{equation}\label{eq-W-01}
			W:=M\cap K(m,\tilde{r}),  
		\end{equation}
		the following two estimates hold:
		\begin{align}\label{eq-p-W}
			|p(x)|\ge \frac{c_0}{2}\left \| p \right \|_{L^\infty(\mathbb{S}^{n-1})}, \ \ \forall x\in W,
		\end{align}
		and
		\begin{align}\label{W-content}
			C_\mathcal{H}^{n-2+\delta}(W) \ge \tilde{r}^{n-2+\delta}.
		\end{align}
	\end{lemma}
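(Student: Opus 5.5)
The plan is to read $c_0$ as a normalization constant attached to $p$: pick a threshold $c_0\in(0,1]$ such that the sublevel-complement set
\[
M_0:=\Bigl\{x\in M:\ |p(x)|\ge \tfrac{c_0}{2}\|p\|_{L^\infty(\mathbb{S}^{n-1})}\Bigr\}
\]
still has positive $(n-2+\delta)$-Hausdorff content. Then $m$ is a point of $M_0$ at which $M$ has good lower content density. The exceptional set $\widetilde M$ collects two kinds of bad points: the points of $M\cap\{p=0\}$, and the points where this density fails.

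\textbf{Step 1 (removing the zero set).} For $p\not\equiv 0$, the nodal set $Z(p)=\{x\in\mathbb{S}^{n-1}: p(x)=0\}$ is the intersection of the sphere with a real algebraic hypersurface. It is therefore a finite union of real-analytic pieces of dimension at most $n-2$, so $\mathcal H^{n-2+\delta}(Z(p))=0$ and hence $C^{n-2+\delta}_{\mathcal H}(Z(p))=0$. We write
\[
M=(M\cap Z(p))\cup\bigcup_{k\ge1}\{x\in M: |p(x)|\ge 2^{-k}\|p\|_\infty\}.
\]
By countable subadditivity of the content, some $k$ gives a piece of positive content. We take the smallest such $k$ and set $c_0:=2^{1-k}$, so that $C^{n-2+\delta}_{\mathcal H}(M_0)>0$. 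The sets $M_0$ are closed because $p$ is continuous, and we put $M\cap Z(p)$ into $\widetilde M$.

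\textbf{Step 2 (a density point).} Set $s=n-2+\delta$. Since $M_0$ is closed and has $\mathcal H^s(M_0)\ge C^s_{\mathcal H}(M_0)>0$, the Davies/Besicovitch subset theorem for closed sets gives a compact $F\subset M_0$ with $0<\mathcal H^s(F)<\infty$. The upper density theorem (Mattila, Thm.~6.2) then says that for $\mathcal H^s$-a.e.\ $x\in F$ we have $\limsup_{r\to0} \mathcal H^s(F\cap B(x,r))/(2r)^s\ge 2^{-s}$. We let $\widetilde M$ also contain the points of $F$ where this fails; they form an $\mathcal H^s$-null set and hence a set of zero content. At a good point $m\in F\setminus\widetilde M$ we pick arbitrarily small radii $r$ with $\mathcal H^s(F\cap B(m,r))\gtrsim r^s$. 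We then pass from measure to content using the comparison $C^s_{\mathcal H}(E)\ge c(n,s)\,\mathcal H^s(E)$, valid for subsets of a set of finite $\mathcal H^s$ measure at small scales. A convenient route for this is the Frostman measure of $F\cap B(m,r)$. Converting Euclidean balls to spherical caps $K(m,\tilde r)$, using $|x-y|\simeq d_s(x,y)$, and then shrinking $\tilde r$ by a fixed factor to absorb the constants, we get $C^s_{\mathcal H}(M\cap K(m,\tilde r))\ge C^s_{\mathcal H}(F\cap K(m,\tilde r))\ge \tilde r^{s}$, which is \eqref{W-content}. Absorbing a constant into the radius is possible because $\tilde r^s$ scales as a power while the density lower bound holds at all small scales along a sequence.

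\textbf{Step 3 (the lower bound \eqref{eq-p-W}).} We need $|p|\ge \tfrac{c_0}{2}\|p\|_\infty$ on all of $W=M\cap K(m,\tilde r)$, and not only on $F$. There are two options. The first is to define $W$ with $M_0$ in place of $M$; this is harmless for the later slicing, which only uses \eqref{eq-p-W} and \eqref{W-content}. The second is to keep $W=M\cap K(m,\tilde r)$ and, by continuity of $p$, start from the slightly stronger threshold $|p(m)|\ge c_0\|p\|_\infty$ (replace $c_0$ by $2c_0$ in Step 1). Then on a cap of radius $\tilde r\le c_0\|p\|_\infty/(2\|\nabla p\|_\infty)$ the value of $|p|$ stays above $\tfrac{c_0}{2}\|p\|_\infty$; Bernstein's inequality on the sphere, $\|\nabla p\|_\infty\le N\|p\|_\infty$, makes this radius explicit, of order $c_0/N$. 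Since Step 2 allows arbitrarily small radii, both constraints on $\tilde r$ can be met at once.

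\textbf{Main obstacle.} The main difficulty is Step 2. Converting an upper-density statement for $\mathcal H^s$ into a \emph{lower} bound for the \emph{content} $C^s_{\mathcal H}(M\cap K(m,\tilde r))\ge\tilde r^s$ with constant exactly $1$ requires care. It also requires that the density scale is compatible with the continuity/Bernstein scale from Step 3. I would first try the Frostman lemma applied to $F$, taking $m$ to be a density point of the Frostman measure $\mu$ (i.e.\ $\limsup_{r\to 0}\mu(B(m,r))/r^s>0$, which holds $\mu$-a.e.). Then $C^s_{\mathcal H}(F\cap B)\ge c\,\mu(F\cap B)\gtrsim r^s$ follows directly from the mass distribution principle.
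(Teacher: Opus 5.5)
Your proposal has a genuine gap in how it treats $c_0$, and a second flaw in Step~2.

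\textbf{The constant $c_0$.} You let $c_0=2^{1-k}$ depend on $p$. In the statement, $c_0$ is not among the objects chosen after $p$, and downstream it is used as a fixed constant. The factor $\frac{2}{c_0}$ in Lemma~\ref{lemma-spherically}, and hence in Theorem~\ref{THM-smallness on the sphere}, must be the same for every spherical polynomial of degree at most $N$. If $c_0$ may depend on $p$, the lemma becomes trivial: take any density point $m\in M$ off the nodal set, set $c_0:=|p(m)|/\|p\|_\infty$, and use continuity. The resulting Remez inequality is then vacuous.

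The missing idea is the compactness argument the paper uses to obtain \eqref{eq-m}. Take $\widetilde M$ to be the exceptional set of the density theorem for $M$ itself, so that $M'=M\setminus\widetilde M$ does not depend on $p$. Suppose no uniform $c_0$ existed. Then there would be $q_k\in P_N(\mathbb{S}^{n-1})$ with $\|q_k\|_\infty=1$ and $\sup_{M'}|q_k|<1/k$. By finite dimensionality, a subsequence converges uniformly to some $q^*$ with $\|q^*\|_\infty=1$ and $q^*=0$ on $M'$. This is impossible, since $C^{n-2+\delta}_{\mathcal H}(M')>0$ while $\dim_H Z(q^*)\le n-2$. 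Your dyadic decomposition in Step~1 could only be salvaged by proving, through exactly this kind of argument, that the minimal $k$ is bounded uniformly over $p\in P_N(\mathbb{S}^{n-1})$. As written, nothing controls it.

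\textbf{The constant $1$ in Step~2.} The target inequality $C^s_{\mathcal H}(M\cap K(m,\tilde r))\ge \tilde r^{s}$ is homogeneous of degree $s$ in the radius. So a small constant coming from a Frostman measure, or from a measure-to-content comparison, cannot be absorbed by shrinking $\tilde r$: shrinking the cap lowers the left-hand side too. Also, the comparison $C^s_{\mathcal H}(E)\gtrsim \mathcal H^s(E)$ that you invoke is not valid in general.

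The paper avoids Davies' theorem, Frostman measures and any finiteness of $\mathcal H^s$ by applying the density theorem to the content itself:
\[
\limsup_{r\downarrow 0}(2r)^{-s}C^s_{\mathcal H}\bigl(M\cap K(x,r)\bigr)\ge 2^{-s}\quad\text{for } C^s_{\mathcal H}\text{-a.e. } x\in M .
\]
Its Vitali-type proof works for the content on pieces of small diameter, with no finiteness assumption. It gives the constant $1$ directly, with room to spare since $K(x,r)$ has geodesic radius $\pi r$.

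Your Bernstein observation in Step~3 is correct. It gives an explicit admissible radius of order $c_0/N$, which is sharper than the paper's bare continuity argument, but it does not address either issue above.
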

	
	\begin{proof}
		We first establish the existence of a set $\widetilde{M}$ with $C_\mathcal{H}^{n-2+\delta}(\widetilde{M})=0$ and a point $m\in M\setminus \widetilde{M}$ together with a constant $c_0>0$ (independent of $p$) such that
		\begin{align}\label{eq-m}
			|p(m)|\ge c_0 \left \| p \right \|_{L^\infty(\mathbb{S}^{n-1})},
		\end{align}
		and
		\begin{align}\label{eq-subset}
			\limsup_{r \downarrow 0} r^{-n+2-\delta}C_\mathcal{H}^{n-2+\delta}(M \cap K(m,r)) \ge 1.
		\end{align}
		To this end, we recall the definition of the upper \( s \)-densities of \( M \) at a point \( a \):
		\[
		\Theta^{s*}(M, a) = \limsup_{r \downarrow 0} (2r)^{-s} C_\mathcal{H}^s(M \cap K(a, r)),
		\]
		It is known (see, e.g., \cite[pp. 89-91]{Ma}) that
		\begin{align*}
			\Theta^{s*}(M, x)\ge 2^{-s},
		\end{align*}
		for \( C_\mathcal{H}^s \)-almost all \( x \in M \). Consequently, there exists a set  $\widetilde{M}$ with $C_\mathcal{H}^{n-2+\delta}(\widetilde{M})=0$ such that, for all $a\in M\setminus \widetilde{M}$,
		$$
		\limsup_{r \downarrow 0} r^{-n+2-\delta} C_\mathcal{H}^{n-2+\delta}(M \cap K(a,r)) \ge 1.
		$$
		This yields \eqref{eq-subset} whenever $m\in M\setminus \widetilde{M}$.
		
		We now prove  \eqref{eq-m} by contradiction. Suppose, for the sake of contradiction, that \eqref{eq-m} fails. Then 
		there exists a sequence  $\left \{ p_k \right \}_k$ with $p_k\neq 0$,  such that
		\begin{align*}
			\sup_{M'}|p_k|< \frac{1}{k} \left \| p_k \right \|_{L^\infty(\mathbb{S}^{n-1})}.
		\end{align*}
		where $M'=M\setminus \widetilde{M}$. 
		Set 
		\begin{align*}
			q_k:= \frac{p_k}{\left \| p_k \right \|_{L^\infty(\mathbb{S}^{n-1})}},
		\end{align*}
		then we have
		\begin{align}\label{eq-unit-function}
			\left \| q_k \right \|_{L^\infty(\mathbb{S}^{n-1})}=1,\qquad \sup_{M'}|q_k|< \frac{1}{k}.
		\end{align}
		Let $P_N(\mathbb{S}^{n-1})$ denote the space of all functions \( p=P|_{\mathbb{S}^{n-1}}\) has degree at most  \( N\in \mathbb{Z}^+\).
		This space is finite-dimensional, and hence the unit sphere in the \( L^\infty(\mathbb{S}^{n-1}) \)-norm is compact. Consequently, the sequence \( \{q_k\} \) admits a convergent subsequence, still denoted by \( q_k \),  whose limit we denote by \( q^* \in P_N(\mathbb{S}^{n-1}) \),  satisfying
		\begin{align*}
			\left \| q^* \right \|_{L^\infty(\mathbb{S}^{n-1})}=1, \ \ \ \ \lim_{k \to \infty} q_k = q^*.
		\end{align*}
		Furthermore, for every \( x \in M' \), we have
		\begin{align}\label{eq-poly-q}
			|q^*(x)|=\lim_{k \to \infty} |q_k(x)|\le \lim_{k \to \infty}\frac{1}{k}=0,
		\end{align}
		which implies $q^*|_{M'}=0$.
		
		Now observe that, by our assumption $C_\mathcal{H}^{n-2+\delta}(M)>0$ and the fact that $C_\mathcal{H}^{n-2+\delta}(\widetilde{M})=0$, it follows that
		\begin{align*}
			C_\mathcal{H}^{n-2+\delta}(M')=C_\mathcal{H}^{n-2+\delta}(M\setminus \widetilde{M})>0,
		\end{align*}
		and therefore
		\begin{align*}
			\dim_H M'\ge n-2+\delta.
		\end{align*}
		However, this contradicts to the well-known fact that 
		\begin{align}\label{eq-zero-set}
			\dim_H\left \{ x\in \mathbb{S}^{n-1} \big| p(x)=0  \right \} \le n-2
		\end{align}
		for any nontrivial spherical polynomial. Therefore, \eqref{eq-m} holds.
		
		Now we are in the position to prove \eqref{eq-p-W} and \eqref{W-content}. On the one hand, by \eqref{eq-subset}, there exists a sequence $\left \{ r_k \right \}_k$ satisfying $\lim_{k \to +\infty}  r_k =0$ such that 
		\begin{align*}
			\lim_{k \to +\infty} r_k^{-n+2-\delta}C_\mathcal{H}^{n-2+\delta}(M \cap K(m,r_k))\ge 1.
		\end{align*}
		Moreover, there exists $N\in \mathbb{N}$ such that when $k>N$, 
		\begin{align}\label{eq-780}
			C_\mathcal{H}^{n-2+\delta}(M \cap K(m,r_k)) \ge r_k^{n-2+\delta}.
		\end{align}
		On the other hand, since $m\in M\setminus \widetilde{M}$ satisfies \eqref{eq-m}, $p$ is a spherical polynomial, there exists $\tilde{r} \in \left \{ r_k \right \}_{k\ge N}$ such that
		\begin{align}\label{eq-772}
			|p(x)|\ge \frac{c_0}{2}\left \| p \right \|_{L^\infty(\mathbb{S}^{n-1})}, \ \ \forall x\in M\cap K(m,\tilde{r}).
		\end{align}
		Combining \eqref{eq-780} and \eqref{eq-772}, there exists $\tilde{r} \in \left \{ r_k \right \}_{k\ge N}$ such that \eqref{eq-p-W} and \eqref{W-content} hold. Defining $W$ as \eqref{eq-W-01}, we complete the proof.
		
	\end{proof}
	\begin{remark}
		For later use in the proof of Lemma \ref{lemma-back}, we choose a constant $r_0>0$ such that
		\begin{align}\label{eq-r}
			\frac{r_0}{2}\left ( \frac{diam(M)}{diam(\mathbb{S}^{n-1})} \right )^{\frac{n-2+\frac{\delta }{2}}{n-2+\delta}} \le \tilde{r} < r_0.
		\end{align}
	\end{remark}
	
	Note that the subset \(W\)  in Lemma \ref{lemma-construct-W} is measured  in terms of  Hausdorff content, whereas the slicing Lemma \ref{lemma-slice} is formulated in terms of  Riesz capacity. In order to justify the applicability of the slicing theorem to $W$, we employ the following lemma, which quantifies the relationship between Hausdorff contents and Riesz capacities. This lemma has been established in \cite[p.28]{Car}, \cite[p.234]{KS}, see also \cite[Lemma 5.11]{HWW2025logtype}.
	
	\begin{lemma}\label{lemma-HWW}
		Let $s>0$, the following conclusions are true:
		
		(i) For every bounded Borel set $E\subset \mathbb{R}^n$,
		\begin{align}\label{eq-Cs-521}
			C_\mathcal{H}^{s}(E)\ge \frac 12 C_{s}(E).
		\end{align}
		
		(ii) Assume $s>s'>0$, then there exists a constant $A>0$, depending on $n,s,E$, such that when $E\subset \mathbb{R}^n$ is a compact set,
		\begin{align}\label{eq-Content-01}
			C_{s'}(E)\ge AC_\mathcal{H}^{s}(E).
		\end{align}
	\end{lemma}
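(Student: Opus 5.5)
For part (i) I would argue directly from the definitions, comparing an arbitrary admissible measure with an arbitrary ball cover. For part (ii) I would use Frostman's lemma to produce a measure with controlled $s'$-energy.

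\emph{Part (i).} Fix a probability measure $\mu\in\mathcal{M}(E)$ and a cover $E\subset\bigcup_j B_j$ by open balls of diameters $d_j$. The key device is to disjointify the cover: set $A_1=B_1\cap E$ and $A_j=(B_j\setminus\bigcup_{i<j}B_i)\cap E$. These sets are disjoint Borel sets with $\sum_j\mu(A_j)=\mu(E)=1$. Since the product sets $A_j\times A_j$ are pairwise disjoint, and $|x-y|\le d_j$ for $x,y\in A_j\subset B_j$,
\begin{align*}
I_s(\mu)\ \ge\ \sum_j\iint_{A_j\times A_j}|x-y|^{-s}\,d\mu\,d\mu\ \ge\ \sum_j d_j^{-s}\mu(A_j)^2 .
\end{align*}
By Cauchy--Schwarz,
\begin{align*}
1=\Bigl(\sum_j\mu(A_j)\Bigr)^2\le\Bigl(\sum_j d_j^{-s}\mu(A_j)^2\Bigr)\Bigl(\sum_j d_j^{s}\Bigr),
\end{align*}
so $I_s(\mu)^{-1}\le\sum_j d_j^s$. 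Taking the infimum over covers and then the supremum over $\mu$ gives $C_s(E)\le C^s_{\mathcal H}(E)$, which is stronger than the stated factor $\frac12$. If the Hausdorff content is normalized with radii instead of diameters, the factor $\frac12$ absorbs the change of normalization. If the sum $\sum_j d_j^s$ is infinite there is nothing to prove.

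\emph{Part (ii).} Let $E$ be compact. We may assume $C^s_{\mathcal H}(E)>0$, since otherwise the inequality is trivial. By Frostman's lemma (e.g.\ \cite[Ch.~8]{Ma}) there is a Radon measure $\mu$ supported on $E$ with
\begin{align*}
\mu(B(x,r))\le r^s \ \text{ for all } x,r, \and \mu(E)\ge c_n C^s_{\mathcal H}(E).
\end{align*}
Let $R=\operatorname{diam}E$. Covering $E$ by a single ball gives $\mu(E)\le R^s$, up to a dimensional constant.

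Now estimate the $s'$-potential by the layer-cake formula. For $x\in E$,
\begin{align*}
\int|x-y|^{-s'}d\mu(y)=\int_0^\infty\mu\bigl(B(x,t^{-1/s'})\bigr)\,dt .
\end{align*}
Split the integral at $t_0=R^{-s'}$. For $t\le t_0$ bound the integrand by $\mu(E)$; for $t>t_0$ bound it by $t^{-s/s'}$, which is integrable at infinity because $s>s'$. This gives
\begin{align*}
\int|x-y|^{-s'}d\mu(y)\le \mu(E)R^{-s'}+\tfrac{s'}{s-s'}R^{s'-s}\le C(s,s')R^{s-s'}.
\end{align*}
Here the second term equals $\tfrac{s'}{s-s'}R^{s-s'}$ after simplifying $t_0^{1-s/s'}=R^{s-s'}$, and the first term is bounded by $R^{s-s'}$ using $\mu(E)\le R^s$.

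For the normalized measure $\nu=\mu/\mu(E)$ this yields $I_{s'}(\nu)\le C(s,s')R^{s-s'}/\mu(E)$. Therefore
\begin{align*}
C_{s'}(E)\ \ge\ \frac{\mu(E)}{C R^{s-s'}}\ \ge\ \frac{c_n}{C(s,s')\,(\operatorname{diam}E)^{s-s'}}\,C^s_{\mathcal H}(E).
\end{align*}
This proves (ii) with $A=A(n,s,s',\operatorname{diam}E)$, where $s'$ is regarded as fixed. The constant degenerates only as $s'\to s$, which is why strict inequality $s>s'$ is needed.

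The main obstacle is the Frostman step. It is the only nontrivial input, and compactness of $E$ is used precisely there; I would simply quote it rather than reprove it. Everything else is elementary.
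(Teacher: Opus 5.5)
Your proof is correct. The paper gives no argument for this lemma; it only quotes it from \cite[p.28]{Car}, \cite[p.234]{KS} and \cite[Lemma 5.11]{HWW2025logtype}. Your write-up therefore supplies what the paper leaves to the literature.

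Part (ii) follows the classical Carleson-type route: a Frostman measure with $\mu(B(x,r))\le r^s$ and $\mu(E)\gtrsim C^s_{\mathcal H}(E)$, then a uniform bound on the $s'$-potential, which is exactly where $s>s'$ is used.

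Part (i) disjointifies the cover and applies Cauchy--Schwarz to $\sum_j d_j^{-s}\mu(A_j)^2$. This is a clean direct argument. It actually gives $C_s(E)\le C^s_{\mathcal H}(E)$, i.e. constant $1$ rather than $\frac12$, and it never uses boundedness of $E$.

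Your explicit version of (ii) also yields the constant in usable form, $A=c(n,s)\,\frac{s-s'}{s}\,(\operatorname{diam}E)^{s'-s}$. It depends on $E$ only through its diameter and can only improve when passing to a subset. That is what justifies using a single constant $A(M,n,\delta)$ when the lemma is applied in \eqref{eq-cap-W} to the compact set $W\subset M$, which depends on $p$. The bare statement that $A$ depends on $n,s,E$ does not make this visible.

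Two minor points. First, Mattila's quantitative Frostman lemma is formulated with $\mathcal H^s_\infty$, i.e. covers by arbitrary sets. Converting to the paper's ball-based content costs a factor $2^{-s}$, so your $c_n$ should read $c(n,s)$; this is harmless. Second, your aside that the factor $\frac12$ would absorb a switch to radius normalization is not right in general. That switch multiplies the content by $2^{-s}$, and $\frac12$ covers this only when $s\le 1$. Since the paper normalizes by diameters, this does not affect your proof.
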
 
	
	Next, by applying the slicing lemma to $W$, we prove the existence of a spherical segment $\tilde{I}_a$ that satisfies certain specific properties. This segment will be essential for the proof  of Theorem \ref{THM-smallness on the sphere}. We begin by recalling the notion of a spherical line segment, which serves as the spherical analogue of a straight line segment in  $\mathbb{R}^n$. 
	\begin{definition}
		Given a starting point $p \in \mathbb{S}^{n-1}$ and a direction $v \in \mathbb{S}^{n-1}$ such that $p \cdot v = 0$, the spherical curve is defined as
		\begin{equation}\label{eq-map-520}
			\kappa = \kappa_v : [0, 2\pi] \to \mathbb{S}^{n-1}, \quad t \mapsto \cos(t)p + \sin(t)v.
		\end{equation}
		The spherical line segment starting at $p$ in the direction $v$ is 
		\begin{equation*}
			I = \operatorname{image}(\kappa|_{[0, l]}),\quad l>0.
		\end{equation*}
	\end{definition}
	
	\begin{lemma}\label{lemma-back} 
		Let $\delta\in (0,1)$. Suppose $M\subset \mathbb{S}^{n-1}$ with $C_\mathcal{H}^{n-2+\delta}(M)>0$ is a closed subset, then there exist a spherical line segment $\tilde{I}_a\subset \mathbb{S}^{n-1}$ and a constant $C_2=C_2(M,n,\delta)>0$ such that 
		\begin{align}\label{eq-line}
			C_{\frac{\delta}{2}}(M\cap \tilde{I}_a)>0, \quad \tilde{I}_a \cap W \neq \varnothing,
		\end{align}
		and 
		\begin{align}\label{eq-back}
			C_{n-2+\frac{\delta}{2}}(M)\le C_2 C_{\frac{\delta}{2}}(M \cap \tilde{I}_a).
		\end{align}
	\end{lemma}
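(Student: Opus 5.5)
We apply the slicing Lemmas \ref{lemma-slice} and \ref{lemma-Ma-10.8} to the set $W$ from Lemma \ref{lemma-construct-W}, viewed as a compact subset of $\mathbb{R}^n$, with $m=n-2$ and $s=n-2+\frac{\delta}{2}$, so that $m<s<n$ and $s-m=\frac{\delta}{2}$. The slices $W\cap V_a$ are intersections of $W$ with $2$-planes $V_a$, $V\in G(n,2)$, $a\in V^\perp$. A $2$-plane meets $\mathbb{S}^{n-1}$ in a circle (possibly degenerate). Every such circle of positive radius $\rho\le1$ is a curve of curvature bounded below, so on it the Riesz $\frac{\delta}{2}$-capacity is comparable to the capacity on a spherical line segment, up to harmless bi-Lipschitz constants. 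Our goal is therefore a slice circle that meets $W$ in a set of positive $\frac{\delta}{2}$-capacity, of size at least a fixed multiple of $C_{n-2+\frac\delta2}(M)$.

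\textbf{Step 1: capacity of $W$.} By \eqref{W-content} and Lemma \ref{lemma-HWW}(ii) with $s=n-2+\delta>s'=n-2+\frac\delta2$, we get $C_{n-2+\frac\delta2}(W)\ge A\tilde r^{\,n-2+\delta}>0$. By Lemma \ref{lemma-HWW}(i), $C_{n-2+\frac\delta2}(M)\le 2C^{n-2+\frac\delta2}_{\mathcal H}(M)\le 2\,\mathrm{diam}(M)^{n-2+\frac\delta2}$. The normalization \eqref{eq-r} is designed to compare these: $\tilde r^{\,n-2+\delta}$ dominates a constant (depending on $M,n,\delta$) times $\mathrm{diam}(M)^{n-2+\frac\delta2}$. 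Hence
\[
C_{n-2+\frac\delta2}(M)\le C\, C_{n-2+\frac\delta2}(W).
\]
Since $\tilde r$ depends on $p$ only through the choice in Lemma \ref{lemma-construct-W}, I expect this to be the delicate point. I would absorb it by noting that \eqref{eq-r} ties $r_0$ to $\tilde r$, and then track the constants through $r_0$, accepting $C_2$ depending on $M$.

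\textbf{Step 2: choosing a good slice.} Lemma \ref{lemma-slice} gives
\[
c\,C_{s}(W)\le\int\!\!\int_{V^\perp}C_{\frac\delta2}(W\cap V_a)\,d\mathcal H^{n-2}a\,d\gamma_{n,2}V.
\]
The inner integrand vanishes unless $V_a$ meets $W\subset\mathbb{S}^{n-1}$, so $a$ ranges over a set of $\mathcal H^{n-2}$-measure at most $c_n$. The total mass of $\gamma_{n,2}$ is $1$. By averaging, some $(V,a)$ satisfies $C_{\frac\delta2}(W\cap V_a)\ge c' C_s(W)$. Lemma \ref{lemma-Ma-10.8} guarantees that such slices form a positive-measure family, so we may also discard the null set of degenerate (tangent or point) circles. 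Let $\tilde I_a$ be an arc of the circle $V_a\cap\mathbb{S}^{n-1}$ containing $W\cap V_a$. This is a spherical segment when $a=0$, and for $a\ne0$ it is a small circle. To stay with great circles one could restrict the slicing to planes through the origin; alternatively, treat small-circle arcs the same way, since the later Turán argument only needs a real-analytic parametrization $t\mapsto\cos t\,p+\sin t\,v$ type trigonometric structure.

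\textbf{Step 3: conclusion.} By monotonicity of capacity, $C_{\frac\delta2}(M\cap\tilde I_a)\ge C_{\frac\delta2}(W\cap V_a)>0$, and $\tilde I_a\cap W\ne\varnothing$. This gives \eqref{eq-line}. Chaining this with Steps 1 and 2 yields \eqref{eq-back} with $C_2=C/(c\,c')$.

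The main obstacle is Step 1: making the constant independent of $p$, i.e., obtaining a uniform lower bound on $\tilde r$ relative to $\mathrm{diam}(M)$ through \eqref{eq-r}. A secondary issue is the small-circle versus great-circle point in Step 2.
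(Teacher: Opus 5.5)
Your route is the paper's own. You use \eqref{W-content} and Lemma \ref{lemma-HWW}(ii) to get $C_{n-2+\frac{\delta}{2}}(W)>0$, then Lemma \ref{lemma-slice} on $W$ with $s=n-2+\frac{\delta}{2}$ and two-dimensional slices to find a plane $\tilde V_a$ with $C_{\frac{\delta}{2}}(W\cap\tilde V_a)\gtrsim C_{n-2+\frac{\delta}{2}}(W)$. Monotonicity passes this to $M\cap\tilde I_a$, and \eqref{eq-r} with $C_s(M)\lesssim\mathrm{diam}(M)^s$ compares $C(W)$ with $C(M)$. Using Lemma \ref{lemma-HWW}(i) instead of $C_s(B(a,r))\sim r^s$ is immaterial. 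Your remark that the integrand vanishes unless $a$ lies in the unit ball of $V^\perp$ is exactly what justifies the paper's one-line passage from the integral to a single good plane.

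\textbf{The constant in Step 1.} The obstacle you flag is genuine, and the paper does not overcome it either. Its chain \eqref{eq-864} ends with the constant $A\,r_0^{n-2+\delta}/\mathrm{diam}(\mathbb{S}^{n-1})^{n-2+\frac{\delta}{2}}$. But \eqref{eq-r} fixes $r_0$ only after $\tilde r$, and makes it comparable to $\tilde r$ up to factors depending on $M$. So the resulting $C_2$ is of order $\tilde r^{-(n-2+\delta)}$ and depends on $p$.

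No bookkeeping through $r_0$ can repair this. Since $W\subset K(m,\tilde r)$, one has $C_{n-2+\frac{\delta}{2}}(W)\le(2\pi\tilde r)^{n-2+\frac{\delta}{2}}$, and $C_{\frac{\delta}{2}}(W\cap V_a)\le(2\pi\tilde r)^{\frac{\delta}{2}}$ for every slice. Meanwhile $\tilde r$ is chosen after $p$: along the density sequence of the $p$-dependent point $m$, and small enough that $|p|\ge\frac{c_0}{2}\|p\|_{L^\infty(\mathbb{S}^{n-1})}$ on $M\cap K(m,\tilde r)$. Nothing bounds it below in terms of $M,n,\delta$. Hence your inequality $C_{n-2+\frac{\delta}{2}}(M)\le C\,C_{n-2+\frac{\delta}{2}}(W)$ forces $C\ge C_{n-2+\frac{\delta}{2}}(M)\,(2\pi\tilde r)^{-(n-2+\frac{\delta}{2})}$.

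A $p$-uniform $C_2$ would have to come from the part of $M\cap\tilde V_a$ lying outside $W$, which neither your argument nor the paper's controls. As written, both arguments give \eqref{eq-line} and \eqref{eq-back} only with $C_2$ depending also on $\tilde r$, hence on $p$.

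\textbf{Small versus great circles.} This concern is also well founded. For $a\neq 0$, the set $\tilde V_a\cap\mathbb{S}^{n-1}$ is a small circle, and the paper passes over this by calling it a spherical curve. Restricting to planes through the origin is not available from Lemma \ref{lemma-slice}, because $\{a=0\}$ is $\mathcal{H}^{n-2}$-null in $V^\perp$ for $n\ge 3$. A bi-Lipschitz comparison of capacities between a small and a great circle also does not help, since it produces no subset of $M$ lying on a great circle. There are two honest fixes.
\begin{enumerate}
\item Keep circular arcs. The map $t\mapsto p\bigl(a+\rho(\cos t\,e_1+\sin t\,e_2)\bigr)$ is still a trigonometric polynomial of degree at most $N$, so the Tur\'an step in Lemma \ref{lemma-spherically} is unaffected. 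The lemma should then be stated for circular arcs.
\item Use the gnomonic map. Since $W\subset K(m,\tilde r)$ lies well inside an open hemisphere, push it forward by $x\mapsto x/(x\cdot m)$. This map is bi-Lipschitz on that cap and sends great circles to straight lines in the hyperplane $\{y\cdot m=1\}$. Then apply Lemma \ref{lemma-slice} in ambient dimension $n-1$ with one-dimensional slices, which is admissible since $n-2<n-2+\frac{\delta}{2}<n-1$. The good slices pull back to genuine spherical line segments.
\end{enumerate}
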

	
	\begin{proof}
		By \eqref{eq-Content-01} and \eqref{W-content}, it follows that
		\begin{align}\label{eq-cap-W}
			C_{n-2+\frac{\delta}{2}}(W)\ge A(M,n,\delta) C_\mathcal{H}^{n-2+\delta}(W)>0,
		\end{align}
		then, applying Lemma \ref{lemma-slice} to $W$ with $s=n-2+\frac{\delta}{2}$, $m=n-2$, we derive that
		\begin{align*}
			c(n,m)C_{n-2+\frac{\delta}{2}}(W)\le \int \int_{V^\perp} C_{\frac{\delta}{2}}(W \cap V_a) \, d\mathcal{H}^{n-2} a \, d\gamma_{n,2} V,
		\end{align*}
		where $V\in G(n,2)$. Thus, there exists some two-dimensional plane $\tilde{V}_a$ such that
		\begin{align*}
			0<c(n,m)C_{n-2+\frac{\delta}{2}}(W)\le C_{\frac{\delta}{2}}(W \cap \tilde{V}_a),
		\end{align*}
		this yields
		\begin{align}\label{eq-845}
			C_{\frac{\delta}{2}}(M \cap \tilde{V}_a)\ge c(n,m)C_{n-2+\frac{\delta}{2}}(W)>0\ \ \text{and}\ \ W \cap \tilde{V}_a\neq \varnothing.
		\end{align}
		Observe that the intersection \(\tilde{\kappa}_a := \mathbb{S}^{n-1} \cap\tilde{V}_a\) is a spherical curve on $\mathbb{S}^{n-1}$. Choose a spherical cap $K$ such that $M\subset K$ and define the spherical line segment as
		\begin{align*}
			\tilde{I}_a = \tilde{\kappa}_a\cap K \subset K,
		\end{align*}
		then it follows from the fact $W\subset M\subset \mathbb{S}^{n-1}$ that 
		\begin{align*}
			M \cap \tilde{V}_a=M\cap \tilde{I}_a\ \text{ and }\ W \cap \tilde{V}_a=W\cap \tilde{I}_a,
		\end{align*}
		this, together with \eqref{eq-845}, implies \eqref{eq-line} and the following
		\begin{align}\label{eq-857}
			C_{\frac{\delta}{2}}(M \cap \tilde{I}_a)\ge c(n,m)C_{n-2+\frac{\delta}{2}}(W).
		\end{align}
		
		It remains to prove \eqref{eq-back}. From \eqref{eq-back} and \eqref{eq-857}, it suffices to establish a  quantitative relation between $C_{n-2+\frac{\delta}{2}}(W)$ and $C_{n-2+\frac{\delta}{2}}(M)$. Indeed, we have
		\begin{align}\label{eq-864}
			C_{n-2+\frac{\delta}{2}}(W) & \ge A(M,n,\delta) C_\mathcal{H}^{n-2+\delta}(W)\nonumber\\
			& \ge A(M,n,\delta)\cdot \tilde{r}^{n-2+\delta}\nonumber\\
			&> A(M,n,\delta)\cdot r_0^{n-2+\delta}\cdot\left ( \frac{diam(M)}{diam(\mathbb{S}^{n-1})}\right )^{n-2+\frac{\delta}{2}}\nonumber\\
			&\ge A(M,n,\delta)\cdot \frac{r_0^{n-2+\delta}}{(diam(\mathbb{S}^{n-1}))^{n-2+\frac{\delta}{2}}}\cdot C_{n-2+\frac{\delta}{2}}(M),
		\end{align}
		where the first inequality follows from \eqref{eq-cap-W},  the second from \eqref{W-content} and the third from \eqref{eq-r}.
		The last inequality is a consequence of the standard Riesz capacity estimate
		\begin{align*}
			C_s(B(a,r))\sim r^s,\ \ \  \forall a\in \mathbb{R}^n,r>0,0<s<n.
		\end{align*}
		Substituting \eqref{eq-864} into \eqref{eq-857} yields \eqref{eq-back}, thereby completing the proof of Lemma \ref{lemma-back}.
	\end{proof}
	
	\subsection{Reduction to spherical line segments}\,
	
	Theorem \ref{THM-smallness on the sphere} follows immediately by combining Lemma  \ref{lemma-back} and the following
	
	
	
	
	\begin{lemma}\label{lemma-spherically}
		Let $\delta \in (0,1)$. Let $p$ be a spherical polynomial of degree at most $N\in \mathbb{Z}^+$. Suppose $M\subset\mathbb{S}^{n-1}$ with $C_\mathcal{H}^{n-2+\delta}(M)>0$ is a closed subset, then for every spherical line segment $I \subset \mathbb{S}^{n-1}$ satisfying
		\begin{align}\label{eq-I-condition}
			C_{\frac{\delta}{2}}(M \cap I) > 0 \quad \text{and} \quad I \cap W \neq \varnothing,
		\end{align}
		there exists an absolute constant $C_0 > 0$ such that
		\begin{align}\label{eq-remez-spherically}
			\sup_{\mathbb{S}^{n-1}} |p|\le \frac{2}{c_0} \left ( \left ( \frac{C_0\cdot C_1(\delta) \cdot 2N}{C_{\frac{\delta}{2}}(M\cap I)}\right )^{\frac{2}{\delta}} {(2N)}^{\frac{2}{\delta}-1} \right )^{2N}\sup_{M\cap I} |p|, \  \  \  \delta \in(0,1),
		\end{align}
		where $c_0>0$ is a constant defined in \eqref{eq-p-W}.
	\end{lemma}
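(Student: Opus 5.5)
The plan is to reduce the statement to a one-dimensional fractal Remez inequality for trigonometric polynomials along the great circle containing $I$. I will then use the point of $I\cap W$ to pass from the segment back to the whole sphere.

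\textbf{Step 1: restriction to the circle.} Parametrize $I$ by $\kappa(t)=\cos(t)p_0+\sin(t)v$, $t\in[0,l]$, as in \eqref{eq-map-520}. For a spherical polynomial $p=P|_{\mathbb{S}^{n-1}}$ of degree at most $N$, the function $g(t):=p(\kappa(t))$ is a trigonometric polynomial of degree at most $N$. Writing $z=e^{it}$, we get $g(t)=z^{-N}Q(z)$ with $Q$ an algebraic polynomial of degree at most $2N$. This is where the factors $2N$ in \eqref{eq-remez-spherically} come from. The map $\kappa$ is an isometry from $[0,l]$ (arc length) onto $I$, and chordal and arc distances are comparable up to absolute constants. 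Hence $C_{\delta/2}(\kappa^{-1}(M\cap I))\ge c\,C_{\delta/2}(M\cap I)$, and the same comparison holds for $\delta/2$-Hausdorff content via Lemma \ref{lemma-HWW}(i).

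\textbf{Step 2: one-dimensional fractal Remez/Turán.} Next I apply the fractal Turán lemma of \cite{YH}, in the form for exponential sums or polynomials on an arc. It states that for $E\subset[0,2\pi]$ with $C^{\alpha}_{\mathcal H}(E)>0$ and $g$ a trigonometric polynomial of degree $N$,
\begin{align*}
\sup_{[0,2\pi]}|g|\le\Bigl(\bigl(\tfrac{C_0C_1(\alpha)\,2N}{C^{\alpha}_{\mathcal H}(E)}\bigr)^{1/\alpha}(2N)^{1/\alpha-1}\Bigr)^{2N}\sup_E|g|.
\end{align*}
I will apply it with $\alpha=\delta/2$ and $E=\kappa^{-1}(M\cap I)$, and replace the content by the capacity using \eqref{eq-Cs-521}. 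The shape of this bound follows from a standard covering argument. Away from an exceptional set of small $\alpha$-content, the sublevel set $\{|Q|<\varepsilon\|Q\|\}$ is covered by at most $2N$ disks around the zeros (Cartan-type lemma). Comparing the $\alpha$-content of $E$ with that of these disks forces a point of $E$ where $|g|$ is not too small. This yields a bound on $\sup_{\text{circle}}|g|$, and in particular $\sup_I|p|\le(\cdots)^{2N}\sup_{M\cap I}|p|$.

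\textbf{Step 3: back to the sphere.} Pick $x_0\in I\cap W$, which exists by \eqref{eq-I-condition}. By \eqref{eq-p-W},
\begin{align*}
\sup_{\mathbb{S}^{n-1}}|p|\le\frac{2}{c_0}|p(x_0)|\le\frac{2}{c_0}\sup_I|p|.
\end{align*}
Chaining this with Step 2 gives \eqref{eq-remez-spherically}.

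The main obstacle is Step 2. The task is to pin down the precise statement from \cite{YH}, with the correct exponent bookkeeping $(\cdot)^{2/\delta}(2N)^{2/\delta-1}$ when $\alpha=\delta/2$. I also need to check that working on a subarc $I$ rather than the full circle costs nothing. It does not, because the estimate only uses the set $E\subset I$ and bounds $|g|$ at all points, including $x_0\in I$.

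A further subtlety is that $W$ and $c_0$ in Lemma \ref{lemma-construct-W} depend on $p$ through $\tilde r$. For this lemma that is harmless, since $p$ is fixed and $c_0$ is independent of $p$.
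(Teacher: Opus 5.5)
Your proposal is correct and follows the paper's proof. You restrict $p$ to the great circle through $I$, bound $C_{\mathcal H}^{\delta/2}(\kappa^{-1}(M\cap I))$ from below using the $1$-Lipschitz bound for $\kappa$ and \eqref{eq-Cs-521}, apply the fractal Tur\'an lemma of \cite{YH} with $\alpha=\delta/2$, and return to the sphere through a point of $I\cap W$ via \eqref{eq-p-W}. Your arc-length parametrization of the whole circle gives integer frequencies $-N,\dots,N$, so it fits the integer-frequency hypothesis of the cited Tur\'an lemma more literally than the paper's rescaling of $I$ to $[0,1]$, whose frequencies $\sigma(I)\lambda_k/(2\pi)$ need not be integers.
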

	
	\noindent {\it{Proof of Theorem \ref{THM-smallness on the sphere}.} }
	By Lemma \ref{lemma-back} and Lemma \ref{lemma-spherically}, we have
	\begin{align}\label{eq-2.48}
		\sup_{\mathbb{S}^{n-1}} |p|\le \frac{2}{c_0} \left ( \left ( \frac{C_2N}{C_{n-2+\frac{\delta}{2}}(M)}\right )^{\frac{2}{\delta}} N^{\frac{2}{\delta}-1} \right )^{2N}\sup_{M\cap I} |p|.
	\end{align}
	Furthermore, Lemma \ref{lemma-HWW} yields
	\begin{align*}
		C_{n-2+\frac{\delta}{2}}(M)\ge A(M,n,\delta) C_\mathcal{H}^{n-2+\delta}(M)>0,
	\end{align*}
	substituting this inequality into \eqref{eq-2.48} gives \eqref{eq-remez-sphere-whole}. This completes the proof of Theorem \ref{THM-smallness on the sphere}. \qed
	
	\noindent {\it{Proof of Lemma \ref{lemma-spherically}.} } 
	We divide the proof of \eqref{eq-remez-spherically} into three steps.
	
	{\it{Step 1: Existence of $I$ satisfying \eqref{eq-I-condition}.}} 
	It follows from Lemma \ref{lemma-HWW} (ii) that
	\begin{align*}
		0<C_\mathcal{H}^{n-2+\delta}(M)\lesssim_{M,n,\delta}C_{n-2+\frac{\delta}{2}}(M).
	\end{align*}
	Then, applying Lemma \ref{lemma-Ma-10.8} to $M$ with parameters $s=n-2+\frac{\delta}{2}$, $m=n-2$ yields that for \( \gamma_{n,2} \)-almost all \( V \in G(n,2) \),
	\begin{align*}
		\mathcal{H}^{n-2}(A)=\mathcal{H}^{n-2} \bigl( \{ a \in V^\perp : C_{\frac{\delta}{2}}(M \cap V_a) > 0 \} \bigr) > 0.
	\end{align*}
	For any $a\in A$, \(V_a\) is a two-dimensional plane, thus the intersection \(I_a := \mathbb{S}^{n-1} \cap V_a\) is a spherical line segment on $\mathbb{S}^{n-1}$. 
	Then, it follows that
	$$M\cap V_a=M\cap I_a,$$
	moreover, 
	\begin{align*}
		\mathcal{H}^{n-2} \bigl( \{ a \in V^\perp : C_{\frac{\delta}{2}}(M \cap I_a) > 0 \} \bigr) > 0.
	\end{align*}
	Note that $W\subset M$ satisfies  $C_{n-2+\frac{\delta}{2}}(W)>0$, thus there exists some $a\in A$ such that $C_{\frac{\delta}{2}}(M\cap I_a)=C_{\frac{\delta}{2}}(M\cap V_a)>0$ and $I_a\cap W\neq \varnothing$.
	
	{\it{Step 2: Reduction to a spherical line segment.}} 
	Let $I\subset \mathbb{S}^{n-1}$ be a spherical line segment satisfying
	\begin{align*}
		C_{\frac{\delta}{2}}(M \cap I) > 0 \quad \text{and} \quad I \cap W \neq \varnothing.
	\end{align*}
	Pick a point  $w\in I \cap W$. Denote by  $v$ the direction vector of $I$  such that $w\cdot v=0$, then the curve 
	\begin{align*}
		\kappa(t) =cos(\sigma(I)t)w+ sin(\sigma(I)t)v,\ \ \ t\in [0,1],
	\end{align*}
	parameterizes $I$, where $\sigma(I)$ denotes the arc length of $I$.
	
	On the one hand, since $w\in I$, there exists $\alpha \in [0,1]$ such that $w=\kappa(\alpha)$.  Moreover,
	\begin{align}\label{eq-K}
		\left \| p \circ \kappa \right \|_{L^{\infty}([0,1])}\ge |(p \circ \kappa)(a)| =|p(w)|\ge \frac{c_0}{2}\left \| p \right \|_{L^\infty(\mathbb{S}^{n-1})},
	\end{align}
	where the last inequality follows from the fact that  $w\in W$ and the property \eqref{eq-p-W}.
	
	On the other hand, set
	\begin{align}\label{eq-set-A-520}
		A:= \{t \in [0, 1] : \kappa(t) \in M\},
	\end{align}
	i.e., $A=\kappa^{-1}(M\cap I).$ 
	Consequently,
	\begin{align}\label{eq-A}
		\sup_{M \cap I} |p| = \|p \circ \kappa\|_{L^\infty(A)}.
	\end{align}
	
	In view of \eqref{eq-K} and \eqref{eq-A}, we see that to prove \eqref{eq-remez-spherically}, it suffices to bound $\left \| p \circ \kappa \right \|_{L^{\infty}([0,1])}$ by $\|p \circ \kappa\|_{L^\infty(A)}$, where $p \circ \kappa$ is a one-dimensional function defined on the spherical line segment $I$. 
	
	{\it{Step 3: Application of Tur\'{a}n's inequality to  $(p \circ \kappa)(t)$.}}
	We first claim  that  there are finite sequences $\{  \beta_k \}\subset\mathbb{C} $ and integers $\{ \lambda_k \}$ such that 
	\begin{align}\label{eq-exp}
		(p \circ \kappa)(t) = \sum_{k=1}^{2N+1} \beta_k e^{2\pi i c \lambda_k t},
	\end{align}
	where 
	\begin{equation}\label{eq-poly-520}
		c=\frac{1}{2\pi}\sigma(I)\quad \text{and}\quad -N\le \lambda_1\le\cdots\le \lambda_{2N+1}\le N.
	\end{equation}
	Indeed, it follows from \eqref{eq-map-520} that there exist multi-indices $w$, $v$, $\alpha$ such that
	\begin{align*}
		(p \circ \kappa)(t) = \sum_{|\alpha| \leq N} c_{\alpha} \prod_{j=1}^{n} \bigl[w_j \cos(\sigma(I)t) + v_j \sin(\sigma(I)t)\bigr]^{\alpha_j}.
	\end{align*}
	Note that
	\[
	w_j \cos(\sigma(I)t) + v_j \sin(\sigma(I)t) = w_j \Re(e^{i\sigma(I)t}) + v_j \Im(e^{i\sigma(I)t}),
	\]
	thus \eqref{eq-exp} and \eqref{eq-poly-520} follow from the binomial theorem.
	
	To proceed, we need the following Tur\'{a}n type inequality on fractal sets, which was recently established in \cite{YH}:
	{\textit{Let $\alpha\in(0,1)$ be given. For any $E \subset [0, 1]$ with $C_{\mathcal{H}}^{\alpha}(E) > 0$, and for every trigonometric polynomial 
			\begin{align*}
				\bar{p}(t)=\sum_{k=1}^q c_k e^{2\pi i m_k t}, \quad c_k \in \mathbb{C},
			\end{align*}
			with distinct frequencies
			\[
			m_1 < \cdots < m_q, \quad m_k \in \mathbb{Z},\ 1 \leq k \leq q.
			\]
			There exists some absolute constant $C_0 > 0$ such that
			\begin{align}\label{eq-Turan-YH}
				\sup_{t \in [0, 1]} |\bar{p}(t)| \leq \left( \left( \frac{C_0(q-1)}{C_{\mathcal{H}}^{\alpha}(E)} \right)^{1/\alpha} (m_q - m_1)^{1/\alpha-1} \right)^{q-1} \sup_{t \in E} |\bar{p}(t)|.
	\end{align}}}
	
	To use the above result, we shall further verify that
	\begin{align}\label{eq-2.42}
		C_{\mathcal{H}}^{\frac{\delta}{2}}(A)>0,
	\end{align}
	where $A$ is given by \eqref{eq-set-A-520}.
	Indeed, we have
	\begin{align}\label{eq-positive}
		C_{\frac{\delta}{2}}(A)=C_{\frac{\delta}{2}}(\kappa^{-1}(M\cap I)) \gtrsim_\delta C_{\frac{\delta}{2}}[(\kappa\circ\kappa^{-1})(M\cap I)]= C_{\frac{\delta}{2}}(M\cap I)>0,
	\end{align}
	where we used the fact that $\kappa(t)$ is a Lipschitz function and the known fact that if \( f: A \to \mathbb{R}^m \), where \( A \subset \mathbb{R}^n \), is a Lipschitz map with the constant $\operatorname{Lip}(f)$, then (see \cite[Theorem 9.1]{Ma})
	\[
	C_s(fA) \leq \operatorname{Lip}(f)^s \, C_s(A) \quad \text{for } s > 0.
	\]
	Combining \eqref{eq-positive} with \eqref{eq-Cs-521} in Lemma \ref{lemma-HWW} yields \eqref{eq-2.42}.
	
	
	
	Now we apply \eqref{eq-Turan-YH} with $\bar{p}=p \circ \kappa$, $E=A$, $|m_q-m_1|=2N$, and obtain
	\begin{align*}
		\left \| p \circ \kappa \right \|_{L^{\infty}([0,1])}\le \left ( \left ( \frac{C_0 \cdot 2N}{C_{\mathcal{H}}^{\frac{\delta}{2}}(A)}\right )^{\frac{2}{\delta}} (2N)^{\frac{2}{\delta}-1} \right )^{2N} \|p \circ \kappa\|_{L^\infty(A)}.
	\end{align*}
	This, together with \eqref{eq-K}, \eqref{eq-A}, \eqref{eq-Cs-521},  and \eqref{eq-positive}, yields \eqref{eq-remez-spherically}. Therefore, the proof is complete.
	\qed
	
	

	\begin{remark}\label{remark-construct-W}
		We point out that a direct slicing argument applied to $M$ gives a spherical line segment \( I_b \) with \( C_{\delta/2}(M \cap I_b) > 0 \), but it may not intersect \( W \).
		The requirement \( I_b \cap W \neq \emptyset \) is essential for the later estimate  \eqref{eq-K}, which motivates the construction of \( W \) and the choice \( I = \tilde{I}_a \) in Lemma \ref{lemma-spherically}.
		
	\end{remark}
	
	\section{Applications to observability inequalities}\label{sec3}
	
	In this section we apply Theorem \ref{THM-smallness on the sphere} to prove Theorems \ref{THM-observability inequality2} and \ref{THM-observability inequality3}. The proofs are presented in Subsections \ref{sec-spherical-heat} and \ref{sec-2m}, respectively.
	
	\subsection{Proof of Theorem \ref{THM-observability inequality2}}\label{sec-spherical-heat}\,
	
	The key ingredient  is to establish the following \emph{spectral inequality} associated for $\Delta_{\mathbb{S}^{n-1}}$: Let  $f \in \operatorname{Ran} P_{-\Delta_{\mathbb{S}^{n-1}} }((-\infty, \lambda])$, where $\lambda>0$, then there exist constants $d_0, d_1, d_2>0$  (depending on $n$, $\delta$, $M$)  
	such that
	\begin{align}\label{eq-spec-sphere}
		\left \| f \right \|_{L^2(\mathbb{S}^{n-1})}\le d_0 \cdot \exp\left( 
		d_1 \cdot \sqrt{\lambda} 
		+ d_2 \cdot \sqrt{\lambda} \cdot \log \lambda 
		\right)\sup_{M} |f|.
	\end{align}
	
	To prove \eqref{eq-spec-sphere}, we first recall that the eigenfunctions of $\Delta_{\mathbb{S}^{n-1}} $ are given by the spherical harmonics \(Y_{\ell, k}\), where \(\ell \in \mathbb{N}\) denotes the degree and \(k = 1, \ldots, n_\ell\) the multiplicity. More precisely, we have
	\begin{align}\label{eq-eigenvalue}
		-\Delta_{\mathbb{S}^{n-1}} Y_{\ell, k} = \lambda_\ell Y_{\ell, k}, \qquad \lambda_\ell = \ell(\ell + n - 2),
	\end{align}
	for all \(\ell \in \mathbb{N}\) and \(k = 1, \dots, n_\ell\). Moreover,
	the system \((Y_{\ell, k})_{\ell, k}\) forms an orthonormal basis of \(L^2(\mathbb{S}^{n-1})\).
	
	If  $f \in \operatorname{Ran} P_{-\Delta_{\mathbb{S}^{n-1}} }((-\infty, \lambda])$, then by \eqref{eq-eigenvalue}, $f$ is a finite linear combination of $Y_{\ell, k}$ with $\ell \leq \lambda^{1/2}$.
	Applying \eqref{eq-remez-sphere-whole} in Theorem \ref{THM-smallness on the sphere} with $N=\lambda^{1/2}$ and $p=f$, we deduce 
	\begin{align*}
		\left \| f \right \|_{L^2(\mathbb{S}^{n-1})}\le C_{spec} \sup_{M} |f|,
	\end{align*}
	with $C_{spec}$ given by
	\begin{align*}
		C_{spec} = d_0 \cdot \exp\left( 
		d_1 \cdot \sqrt{\lambda} 
		+ d_2 \cdot \sqrt{\lambda} \cdot \log \lambda 
		\right),
	\end{align*}
	where
	\begin{align*}
		d_0 = |\mathbb{S}^{n-1}|^{\frac12}\cdot\frac{2}{c_0}, \quad 
		d_1 = \frac{4}{\delta}\cdot \log \left(\frac{C(M,n,\delta) }{C_\mathcal{H}^{n-2+\delta}(M)}\right), \quad 
		d_2 = \frac{4-\delta}{\delta}.
	\end{align*}
	Then \eqref{eq-spec-sphere} follows.
	
	Once the spectral  inequality  \eqref{eq-spec-sphere}  is established,  the observability inequality \eqref{eq-obs-sphere} follows by applying the same arguments as in  \cite[Section 4]{Burq}, which in turn rely on the adapted Lebeau–Robiano strategy from   \cite{cone}. We therefore omit the details, and the proof of Theorem \ref{THM-observability inequality2} is complete.
	\qed
	

	As a consequence, Theorem \ref{THM-observability inequality2} implies  the null-controllability for the  heat equation on the sphere (see \cite[Section 5]{Burq}). Let $\mathbb{M}(\mathbb{S}^{n-1})$ denote the space of all Borel measures on $\mathbb{S}^{n-1}$. 
	
	\begin{corollary}\label{controllability1}
		Let $T>0$. Then for each $u_0\in L^2(\mathbb{S}^{n-1})$, there is a control 
		$$\mu \in L^\infty((0,T); \mathbb{M}(\mathbb{S}^{n-1})),\quad supp\ \mu \subset [0,T]\times M,$$
		with
		$$\left \| \mu \right \|_{L^\infty((0,T); \mathbb{M}(\mathbb{S}^{n-1}))}\le C_{obs}\left \| u_0 \right \|_{L^2(\mathbb{S}^{n-1})},$$ 
		such that the solution $u$ of the controlled equation
			\begin{align*}
				\begin{cases} 
					\partial_t u(t, x)-\Delta_{\mathbb{S}^{n-1}} u(t, x)=\mu (t, x), & t>0,\; x\in\mathbb{S}^{n-1}, \\  
					u(0, \cdot)=u_0\in L^2(\mathbb{S}^{n-1}),
				\end{cases}
			\end{align*}
			satisfies $u(T)=0$ over $\mathbb{S}^{n-1}$.
		\end{corollary}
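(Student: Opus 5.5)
The plan is the standard duality (Hilbert Uniqueness Method) argument, run in the setting of \cite[Section 5]{Burq}, with Theorem \ref{THM-observability inequality2} as the only analytic input.

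\emph{The dual functional.} Let $\varphi_T\in L^2(\mathbb{S}^{n-1})$ and let $\varphi(t)=e^{(T-t)\Delta_{\mathbb{S}^{n-1}}}\varphi_T$ be the backward adjoint solution. Since $\Delta_{\mathbb{S}^{n-1}}$ is self-adjoint, \eqref{eq-obs-sphere} (after $t\mapsto T-t$) gives
\begin{align*}
\|\varphi(0)\|_{L^2}\le C_{obs}\int_0^T\sup_{x\in M}|\varphi(t,x)|\,dt .
\end{align*}
For $t<T$, $\varphi(t)$ is smooth, so the restriction $\varphi(t)|_M$ lies in $C(M)$. Define the subspace
\begin{align*}
X=\{(t,x)\mapsto\varphi(t,x)|_{M}\}\subset L^1((0,T);C(M)),
\end{align*}
and the linear functional $\ell(\varphi|_M)=-\langle u_0,\varphi(0)\rangle_{L^2}$. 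By the observability estimate, $\ell$ is well defined on $X$ (if $\varphi|_M=0$ then $\varphi(0)=0$), and $|\ell|\le C_{obs}\|u_0\|_{L^2}\,\|\cdot\|_{L^1(C(M))}$.

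\emph{Extension and identification of the control.} By Hahn--Banach, $\ell$ extends to all of $L^1((0,T);C(M))$ with the same norm. The dual of this space is $L^\infty_{w^*}((0,T);\mathbb{M}(M))$, weak-$*$ measurable measure-valued functions. Riesz representation then gives $\mu$ supported in $[0,T]\times M$ with $\|\mu\|\le C_{obs}\|u_0\|_{L^2}$ and
\begin{align*}
\int_0^T\langle\mu(t),\varphi(t)\rangle\,dt=-\langle u_0,\varphi(0)\rangle .
\end{align*}

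\emph{Verifying $u(T)=0$.} Define $u$ by Duhamel, $u(T)=e^{T\Delta}u_0+\int_0^Te^{(T-t)\Delta}\mu(t)\,dt$. This makes sense in $H^{-s}$ for $s>(n-1)/2$, since measures embed in such Sobolev spaces and the heat semigroup is bounded there. Pairing with $\varphi_T$ and using self-adjointness gives
\begin{align*}
\langle u(T),\varphi_T\rangle=\langle u_0,\varphi(0)\rangle+\int_0^T\langle\mu(t),\varphi(t)\rangle\,dt=0 .
\end{align*}
Since $\varphi_T\in C^\infty$ is arbitrary, $u(T)=0$.

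\emph{Main obstacle.} The delicate step is the functional-analytic bookkeeping: the dual of $L^1((0,T);C(M))$ is the weak-$*$ Bochner space rather than the strongly measurable $L^\infty$. One also has to check that $\varphi|_M$ is integrable in time near $t=T$. This holds because $\sup_M|\varphi(t)|\le\|\varphi(t)\|_{L^\infty}$, and for smooth $\varphi_T$ this is bounded uniformly in $t$. Restricting to smooth $\varphi_T$ is enough, since they are dense in $L^2(\mathbb{S}^{n-1})$. I would follow \cite{Burq} for these points rather than rederive them.
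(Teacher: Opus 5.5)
Your proposal is correct and matches the paper's approach. The paper gives no separate proof and defers to \cite[Section 5]{Burq}, whose argument is exactly this Hahn--Banach duality: extend $\varphi|_M\mapsto -\langle u_0,\varphi(0)\rangle$ from restrictions of adjoint solutions to $L^1((0,T);C(M))$, represent the extension by a measure-valued $\mu$ of norm at most $C_{obs}\|u_0\|_{L^2}$, and get $u(T)=0$ by pairing the Duhamel formula with $\varphi_T$. Your points on weak-$*$ measurability and on using smooth $\varphi_T$, so that $\sup_M|\varphi(t)|$ stays integrable near $t=T$, supply details the paper leaves implicit.
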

		
		\subsection{Proof of Theorem \ref{THM-observability inequality3}}\label{sec-2m}\,
		
		The main step is to establish the  following \emph{spectral inequality} for the operator
		$H := -\Delta + |x|^{2m}$ ($m \in \mathbb{Z}^+,m\ge 2$).
		\begin{lemma}\label{lemma-spectral}
			Let $\Gamma_1$ and $\Theta_1$ be given by \eqref{eq-UN-har-526-0} and \eqref{eq-UN-har-526-1}, respectively.
			If $v \in \operatorname{Ran} P_{H}((-\infty, \mu^2])$ with $\mu>0$, then there exists a constant $C>0$, depending on $\Theta_1,n,\delta,m$, 
			such that
			\begin{align}\label{eq-spectral}
				\int_{\mathbb{R}^n} |v(x)|^2 dx \leq C e^{C \mu^{1+1/m}(1+|\log \mu|)} \int_{r_0}^{r_1} \left(\sup_{\omega\in \Theta_1} |v(r,\omega)|\right)^2 dr.
			\end{align}
		\end{lemma}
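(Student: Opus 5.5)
The plan is to decompose $v$ in spherical harmonics, apply Theorem \ref{THM-smallness on the sphere} on each sphere $|x|=r$ with $r\in[r_0,r_1]$, and then recover the full $L^2(\mathbb{R}^n)$ norm from the annulus using radial estimates for the eigenfunctions of the confining operator.

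\textbf{Step 1: spectral structure.} Since $V(x)=|x|^{2m}$ is radial, separation of variables gives an orthonormal eigenbasis of the form
\begin{align*}
\phi_{\ell,j,k}(x)=R_{\ell,j}(r)\,Y_{\ell,k}(\omega),
\end{align*}
where $Y_{\ell,k}$ are the spherical harmonics of degree $\ell$ and $R_{\ell,j}$ are eigenfunctions of the radial operator
\begin{align*}
-\partial_r^2-\tfrac{n-1}{r}\partial_r+\tfrac{\ell(\ell+n-2)}{r^2}+r^{2m},
\end{align*}
with eigenvalues $E_{\ell,j}$. Since $E_{\ell,j}\ge \ell(\ell+n-2)/r^2+r^{2m}$ pointwise in the form sense, optimizing in $r$ gives
\begin{align*}
E_{\ell,j}\gtrsim \ell^{2m/(m+1)}.
\end{align*}
This is where the asymptotic formula \eqref{eq-eigen-2m} from the Appendix enters. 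Consequently, $v\in\operatorname{Ran}P_H((-\infty,\mu^2])$ involves only $\ell\le N:=C\mu^{(m+1)/m}=C\mu^{1+1/m}$. For each fixed $r$, the map $\omega\mapsto v(r,\omega)$ is therefore a spherical polynomial of degree at most $N$.

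\textbf{Step 2: sphere-wise Remez.} Apply Theorem \ref{THM-smallness on the sphere} with $M=\Theta_1$ to each slice $v(r,\cdot)$, and bound $\|\cdot\|_{L^2(\mathbb{S}^{n-1})}$ by the supremum. This gives, for every $r$,
\begin{align*}
\|v(r,\cdot)\|_{L^2(\mathbb{S}^{n-1})}\le e^{C N\log N}\sup_{\Theta_1}|v(r,\cdot)|, \qquad e^{CN\log N}=e^{C\mu^{1+1/m}(1+|\log\mu|)}.
\end{align*}
Squaring and integrating over $r\in[r_0,r_1]$, and using $r^{n-1}\le r_1^{n-1}$, bounds $\|v\|_{L^2(\Gamma)}$ by the right-hand side of \eqref{eq-spectral}, where $\Gamma=\{r_0\le|x|\le r_1\}$ is the full annulus.

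\textbf{Step 3: from annulus to $\mathbb{R}^n$ (main obstacle).} It remains to show
\begin{align*}
\|v\|_{L^2(\mathbb{R}^n)}\le Ce^{C\mu^{1+1/m}}\|v\|_{L^2(\Gamma)}.
\end{align*}
This is a spectral inequality for the open annulus, and I would obtain it from known results for $-\Delta+|x|^{2m}$ on sets of positive measure: Martin \cite{Martin}, or the cone results \cite{cone,Miller2008cones}. Both give a constant of the form $e^{C\mu^{1+1/m}}$ for thick or conical sets. Since the annulus is bounded, I would instead combine the following ingredients:
\begin{itemize}
\item the \emph{Agmon decay} of spectral projections outside $|x|\lesssim\mu^{1/m}$;
\item a \emph{Carleman/propagation-of-smallness estimate} for $\partial_s^2 F-H F=0$, applied to the extended function $F(s,x)=\sum\frac{\sinh(s\sqrt{E})}{\sqrt{E}}a_E\phi_E$, carrying smallness from the annulus to the ball of radius $\mu^{1/m}$ in the manner of Lebeau--Robbiano.
\end{itemize}
The resulting cost is $e^{C\mu\cdot\mu^{1/m}}$, matching the claimed exponent. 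The delicate point is keeping the constant at $\mu^{1+1/m}$ rather than a worse power of $\mu$ when propagating across the growing region, which requires rescaling $x\mapsto\mu^{1/m}x$. Citing \cite{Martin} directly is the first thing I would try. Combining Steps 2 and 3 then yields \eqref{eq-spectral}.
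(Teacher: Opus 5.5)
Your proposal is correct and has the same skeleton as the paper's proof. You expand $v$ in spherical harmonics, bound the top angular degree by $C\mu^{1+1/m}$, apply Theorem \ref{THM-smallness on the sphere} to $v(r,\cdot)$ on each sphere $|x|=r$, and pass from the full annulus to $\mathbb{R}^n$ by citing \cite[Theorem 2.1]{Martin}. Martin's spectral inequality holds for every measurable set of positive measure, so the bounded annulus is directly admissible. The Carleman/Lebeau--Robbiano detour in your Step 3 is therefore unnecessary.

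The genuine difference is in the step the paper calls the key ingredient, the bound on $\ell_{\max}$. The paper derives it from the Titchmarsh-type WKB asymptotics \eqref{eq-eigen-2m}, proved in the Appendix. You instead use that $H$ preserves each angular sector and that the radial kinetic term is nonnegative. This gives $\langle Hv_\ell,v_\ell\rangle\ge\inf_{r>0}\bigl(\ell(\ell+n-2)r^{-2}+r^{2m}\bigr)\|v_\ell\|^2\ge c\,\ell^{2m/(m+1)}\|v_\ell\|^2$. Combined with $\langle Hv_\ell,v_\ell\rangle\le\mu^2\|v_\ell\|^2$, it forces $v_\ell=0$ for $\ell>C\mu^{1+1/m}$.

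Your variational bound is two lines, uniform in $\ell$, and valid in every dimension $n\ge 2$. It makes the Appendix unnecessary for this lemma, so your remark that the asymptotic formula ``enters'' here is superfluous. The paper's route gives much finer information (the leading constant and the $k$-dependence) than this lemma needs. Moreover, \eqref{eq-eigen-2m} is stated only as $k\to\infty$, so extracting from it a bound on $\ell$ uniform over all eigenvalues below $\mu^2$ needs extra justification, which your argument avoids.

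You also correctly keep the Jacobian $r^{n-1}\le r_1^{n-1}$ when converting $\|v\|_{L^2(\Gamma)}$ into the radial integral; the paper's display \eqref{eq-Martin-2} omits it.
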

		
		\begin{proof}
			Let 
			\begin{equation*}
				0<\mu_1<\cdots<\mu_k\longrightarrow\infty,
			\end{equation*}
			be the distinct eigenvalues for the operator $H := -\Delta + |x|^{2m}$. 
			Since the potential is radial, the standard separation of variables arguments yield that for each $v \in \mathbf{1}_{H<\mu^2}L^2(\mathbb{R}^n)$, it admits the expansion (see, e.g., in \cite[pp. 188-189]{BS} or \cite[p. 90]{Reed})
			\begin{align}\label{eq-eigenfunction}
				v=\sum_{\ell=0}^{\ell_{max}}r^{-\frac{n-1}{2}}f(r)Y_{\ell, k_\ell}(\omega), \quad \omega\in \mathbb{S}^{n-1}, 
			\end{align}
			where \( Y_{\ell, k_\ell} \) denote the  
			spherical harmonics, \(\int_0^\infty |f(r)|^2 \, dr < \infty\), and $\ell_{max}\in \mathbb{N}$ is determined by $\mu$. 
			Moreover, for each eigenvalue $\mu_k$, the radial function satisfies
			\begin{align}\label{eq-eigenequation-2m}
				\frac{d^2f(r)}{dr^2}+\left \{ \mu_k-r^{2m}- \frac{\ell(\ell+n-2)+\frac{(n-1)(n-3)}{4}}{r^2}\right \} f(r)=0
			\end{align}
			for certain values of $\ell\in \mathbb{N}$, where $r\in (0, +\infty)$.
			The key ingredient is to establish an upper bound for $\ell_{max}$, which enables the application of  Theorem \ref{THM-smallness on the sphere}.  This is achieved through the following claim.
			
			\noindent \textbf{Claim:} Let $\mu_k, \ell$ satisfy the equation \eqref{eq-eigenequation-2m}, then
			\begin{align}\label{eq-eigen-2m}
				\mu_k \underset{}{\sim} c_m \left ( \frac{1}{2}\left (\ell+\frac{n-3}{2}  \right ) + k + \frac{3}{4} \right )^{\frac{2m}{m+1}},\quad k \to +\infty,
			\end{align}
			where 
			$$c_m=\left ( 4m\sqrt{\pi}\cdot\frac{\Gamma(\frac{1}{2m}+\frac{3}{2})}{\Gamma(\frac{1}{2m})} \right )^{\frac{2m}{m+1}}.$$
			We point out that in three dimension, \eqref{eq-eigen-2m} was established in \cite{Titchmarsh1}. The higher-dimensional case follows by a similar argument; we provide a proof for $n>3$ in the appendix (see Section \ref{appendix}) for the sake of completeness.
			
			By \eqref{eq-eigen-2m} and the assumption $\mu_k\le \mu ^2$, we obtain that
			\begin{align*}
				\ell_{max} \le 2 (c_m^{-1})^{\frac{m+1}{2m}} \mu ^{1+\frac{1}{m}}.
			\end{align*}
			Applying the Remez type inequality \eqref{eq-remez-sphere-whole} with $N=2 (c_m^{-1})^{\frac{m+1}{2m}} \mu ^{1+\frac{1}{m}}$, we derive that 
			\begin{align}\label{eq-harmonics}
				\sup_{\omega\in \mathbb{S}^{n-1}}\left|\sum_{\ell=1}^{\ell_{max}}r^{-\frac{n-1}{2}}f(r)Y_{\ell, k_\ell}(\omega)\right|\le Ce^{C \mu^{1+1/m}(1+\log \mu)} \sup_{\omega\in \Theta_1}\left|\sum_{\ell}^{\ell_{max}}r^{-\frac{n-1}{2}}f(r)Y_{\ell, k_\ell}(\omega)\right|,
			\end{align}
			where $C>0$ depends on $\Theta_1,n,\delta,m$.
			Set
			\begin{align*}
				\Gamma_2 :=\{ x \in \mathbb{R}^n: r_0\le |x|\le r_1, x/|x| \in \mathbb{S}^{n-1} \}.
			\end{align*}
			Clearly, we have $|\Gamma_2|>0$, then it follows from \cite[Theorem 2.1]{Martin} that for each $v \in \mathbf{1}_{H<\mu^2}L^2(\mathbb{R}^n)$,
			\begin{align}\label{eq-Martin-2}
				\int_{\mathbb{R}^n} |v(x)|^2 \, dx &\leq C e^{C\mu^{1+1/m}|\log \mu|} \int_{\Gamma_2} |v(x)|^2 \, dx\nonumber\\
				&=C e^{C\mu^{1+1/m}|\log \mu|} \int_{r_0}^{r_1}\int_{\mathbb{S}^{n-1} }^{} |v(r,\omega)|^2 drd\omega.
			\end{align}
			Combining  \eqref{eq-eigenfunction}, \eqref{eq-harmonics} and \eqref{eq-Martin-2}, we have
			\begin{align}\label{eq-spec-unhar-527-1}
				\int_{\mathbb{R}^n} |v(x)|^2 \, dx \leq& C e^{C \mu^{1+1/m}(1+|\log \mu|)}\int_{r_0}^{r_1} \left(\sup_{\omega\in \Theta_1}\left|\sum_{\ell}^{\ell_{max}}r^{-\frac{n-1}{2}}f(r)Y_{\ell, k_\ell}(\omega)\right|\right)^2 dr\nonumber\\
				= & C e^{C \mu^{1+1/m}(1+|\log \mu|)} \int_{r_0}^{r_1} \left(\sup_{\omega\in \Theta_1} |v(r,\omega)|\right)^2 dr.
			\end{align}
			Therefore, the proof of Lemma \ref{lemma-spectral} is complete.
		\end{proof}

		Once the spectral inequality \eqref{eq-spectral} is established, the proof of Theorem \ref{THM-observability inequality3} follows from the adapted Lebeau-Robbiano strategy in \cite{cone}. For the reader’s convenience, we show how  \eqref{eq-spectral} is applied to obtain the following recurrence inequality: there exist $T_0 > 0$ and $\lambda_1 > 0$, such that for all $\lambda \ge \lambda_1$, $\tau \in (0, T_0]$ and $u_0 \in L^2(\mathbb{R}^n)$,
		\begin{align}\label{eq-1245}
			&f(\lambda) \bigl\| e^{-\tau(\lambda)H} u_0 \bigr\|_{L^2(\mathbb{R}^n)} - f\!\left(\tfrac{5}{4}\lambda\right) \|u_0\|_{L^2(\mathbb{R}^n)} \le \int_0^{\tau(\lambda)} \left( \int_{r_0}^{r_1} \Bigl( \sup_{\omega \in \Theta_1} \bigl| e^{-tH} u_0(r,\omega) \bigr| \Bigr)^2 dr \right)^{\frac{1}{2}} dt,
		\end{align}
		where $f$ is defined by \eqref{equ-1023-6}. The observability inequality \eqref{eq-obs-ineq-2m}  then follows from \eqref{eq-1245} by a standard telescoping series argument, which is exactly the same as in \cite[Section 4.3]{HWW2025logtype}, therefore, we omit the details.
		
		
		To prove \eqref{eq-1245}, we first define the orthogonal projection
		\begin{align*}
			\pi_\lambda: L^2(\mathbb{R}^n)\to \mathcal {E}_\lambda:= \operatorname{Ran} P_{H}((-\infty, \lambda]).
		\end{align*}
		By \eqref{eq-spectral}, there exist $C>0$ and a numerical constant $\lambda_0\gg 1$ such that
		\begin{align*}
			\left \| e^{-tH}\pi_\lambda u_0 \right \|_{L^2(\mathbb{R}^n)} &\le e^{C \lambda^{\frac{1}{2}+\frac{1}{2m}}(1+|\log \lambda|)} \left(\int_{r_0}^{r_1} \left(\sup_{\omega\in \Theta_1} |e^{-tH}\pi_\lambda u_0(r,\omega)|\right)^2 dr\right )^{\frac{1}{2}}\nonumber\\
			&\le e^{C \lambda(\log \lambda)^{-2}} \left(\int_{r_0}^{r_1} \left(\sup_{\omega\in \Theta_1} |e^{-tH}\pi_\lambda u_0(r,\omega)|\right)^2 dr\right )^{\frac{1}{2}},
		\end{align*}
		holds for $\lambda\ge \lambda_0$ and $u_0 \in L^2(\mathbb{R}^n)$.
		Then
		\begin{align*}
			\|e^{-\tau H}\pi_\lambda u_0\|_{L^2(\mathbb{R}^n)}&\le \frac{2}{\tau }\int_{\frac{\tau }{2}}^\tau \|e^{-t H}\pi_\lambda u_0\|_{L^2(\mathbb{R}^n)}dt\nonumber\\
			&\le \frac{2}{\tau }e^{C \lambda(\log \lambda)^{-2}}\int_{\frac{\tau }{2}}^\tau \left(\int_{r_0}^{r_1} \left(\sup_{\omega\in \Theta_1} |e^{-tH}\pi_\lambda u_0(r,\omega)|\right)^2 dr\right )^{\frac{1}{2}} dt.
		\end{align*}
		
		Next, we define two functions $\varphi$ and $\psi$ as follows:
		\begin{align*}
			\psi(t):=t\log t,\;\;t>0;\;\;\;\;\varphi(\lambda):=(\log \lambda)^2,\;\;\lambda\gg 1.
		\end{align*}
		Since
		\begin{align*}
			\lim_{\tau\to 0^+}\tau\psi(\frac{1}{\tau})=+\infty\;\;\mbox{and}\;\;\lim_{\lambda\to+\infty}\frac{C\lambda}{\varphi(\lambda)}=+\infty,
		\end{align*}
		there exists $T_0>0$ such that when $\tau\in (0,T_0]$, $\lambda\ge \lambda_0$,
		$$
		\frac{1}{\tau }e^{C \lambda(\log \lambda)^{-2}}=e^{\tau\psi(\frac{1}{\tau})+\frac{C\lambda}{\varphi(\lambda)}}\leq e^{\tau\psi(\frac{1}{\tau})\frac{C\lambda}{\varphi(\lambda)}},
		$$
		furthermore,
		\begin{align*}
			\|e^{-\tau H}\pi_\lambda u_0\|_{L^2(\mathbb{R}^n)}\le 2 e^{\tau\psi(\frac{1}{\tau})\frac{C\lambda}{\varphi(\lambda)}} \int_{\frac{\tau }{2}}^\tau \left(\int_{r_0}^{r_1} \left(\sup_{\omega\in \Theta_1} |e^{-tH}\pi_\lambda u_0(r,\omega)|\right)^2 dr\right )^{\frac{1}{2}} dt.
		\end{align*}
		It follows from the triangle inequality that when $\tau\in (0,T_0]$, $\lambda\ge \lambda_0$,
		\begin{align}\label{eq-1282}
			\left \| e^{-\tau H}u_0 \right \|_{L^2(\mathbb{R}^n)} &\le  \left \| e^{-\tau H}\pi_\lambda u_0 \right \|_{L^2(\mathbb{R}^n)}+\left \| e^{-\tau H}(1-\pi_\lambda)u_0 \right \|_{L^2(\mathbb{R}^n)}\nonumber\\
			&\le 2 e^{\tau\psi(\frac{1}{\tau})\frac{C\lambda}{\varphi(\lambda)}} \int_{\frac{\tau }{2}}^\tau \left(\int_{r_0}^{r_1} \left(\sup_{\omega\in \Theta_1} |e^{-tH}\pi_\lambda u_0(r,\omega)|\right)^2 dr\right )^{\frac{1}{2}} dt\nonumber\\
			&\ \ +\left \| e^{-\tau H}(1-\pi_\lambda)u_0 \right \|_{L^2(\mathbb{R}^n)}\nonumber\\
			&\le 2 e^{\tau\psi(\frac{1}{\tau})\frac{C\lambda}{\varphi(\lambda)}} \int_{\frac{\tau }{2}}^\tau \left(\int_{r_0}^{r_1} \left(\sup_{\omega\in \Theta_1} |e^{-tH}u_0(r,\omega)|\right)^2 dr\right )^{\frac{1}{2}} dt\nonumber\\
			&\ \ +2 e^{\tau\psi(\frac{1}{\tau})\frac{C\lambda}{\varphi(\lambda)}} \int_{\frac{\tau }{2}}^\tau \left(\int_{r_0}^{r_1} \left(\sup_{\omega\in \Theta_1} |e^{-tH}(1-\pi_\lambda) u_0(r,\omega)|\right)^2 dr\right )^{\frac{1}{2}} dt\nonumber\\
			&\ \ +\left \| e^{-\tau H}(1-\pi_\lambda)u_0 \right \|_{L^2(\mathbb{R}^n)}.
		\end{align}
		For the last term on the right hand side of \eqref{eq-1282}, it follows that
		\begin{align}\label{eq-1291}
			\left \| e^{-\tau H}(1-\pi_\lambda)u_0 \right \|_{L^2(\mathbb{R}^n)}\le e^{-\tau\lambda}\left \| u_0 \right \|_{L^2(\mathbb{R}^n)}.
		\end{align}
		For the second term on the right hand side of \eqref{eq-1282}, note that
		\begin{align*}
			&\int_{r_0}^{r_1} \left(\sup_{\omega\in \Theta_1} |e^{-tH}(1-\pi_\lambda) u_0(r,\omega)|\right)^2 dr\nonumber\\
			\le \ &\int_{r_0}^{r_1} \left(\sup_{\omega\in \mathbb{S}^{n-1}} |e^{-tH}(1-\pi_\lambda) u_0(r,\omega)|\right)^2 dr 
			\le  (r_1-r_0) \left( \sup_{\substack{\omega\in \mathbb{S}^{n-1} \\ r\in (r_0,r_1)}} \left| e^{-tH}(1-\pi_\lambda) u_0(r,\omega) \right| \right)^2\nonumber\\
			\le \ & (r_1-r_0) \left [ a_0(t-\frac{\tau}{3})^{-\frac{n}{4}}\left \| e^{-\frac{\tau}{3}H}(1-\pi_\lambda) u_0\right \|_{L^2((r_0,r_1)\times \mathbb{S}^{n-1} )}  \right ]^2,
		\end{align*}
		in the last inequality, we have used the standard $L^2\to L^\infty$ estimate:
		\begin{align*}
			\left \| e^{-tH} \right \|_{L^2((r_0,r_1)\times \mathbb{S}^{n-1})\to L^\infty((r_0,r_1)\times \mathbb{S}^{n-1})}\le a_0 t^{-\frac{n}{4}},\quad t>0.
		\end{align*}
		Thus
		\begin{align}\label{eq-1305}
			&\int_{\frac{\tau }{2}}^\tau \left(\int_{r_0}^{r_1} \left(\sup_{\omega\in \Theta_1} |e^{-tH}(1-\pi_\lambda) u_0(r,\omega)|\right)^2 dr\right )^{\frac{1}{2}} dt\nonumber\\
			\le \ & \int_{\frac{\tau }{2}}^\tau (r_1-r_0)^{\frac{1}{2}} a_0(t-\frac{\tau}{3})^{-\frac{n}{4}}\left \| e^{-\frac{\tau}{3}H}(1-\pi_\lambda) u_0\right \|_{L^2((r_0,r_1)\times \mathbb{S}^{n-1} )} dt\nonumber\\
			\le \ & (r_1-r_0)^{\frac{1}{2}} (\frac{6}{\tau})^{\frac{n}{4}} e^{-\frac{\lambda\tau}{3}}\left \| u_0 \right \|_{L^2(\mathbb{R}^n)}.
		\end{align}
		
		Letting $g(\tau,\lambda):=\frac{1}{2}e^{-C\tau\psi(\frac{1}{\tau})\frac{\lambda}{\varphi(\lambda)}}$, with $\tau\in(0,T_0]$ and $\lambda\geq\lambda_0$,  plugging \eqref{eq-1291} and \eqref{eq-1305} into \eqref{eq-1282}, we deduce that when $\tau\in(0,T_0]$ and $\lambda\geq \lambda_0$,
		\begin{align}\label{eq-1312}
			g(\tau,\lambda)\|e^{-\tau H}u_0\|_{L^2(\mathbb{R}^n)}
			&\leq  \int_{\frac{\tau }{2}}^\tau \left(\int_{r_0}^{r_1} \left(\sup_{\omega\in \Theta_1} |e^{-tH}u_0(r,\omega)|\right)^2 dr\right )^{\frac{1}{2}} dt \nonumber\\
			&\ +\left((r_1-r_0)^{\frac{1}{2}}a_0(\frac{6}{\tau})^{\frac{n}{4}}e^{-\frac{\lambda\tau}{3}}+g(\tau,\lambda)e^{-\lambda \tau}\right)\|u_0\|_{L^2(\mathbb{R}^n)}.
		\end{align}
		
		We now proceed to estimate the last term in \eqref{eq-1312}. Let $\tau$ be defined by
		\begin{align}\label{equ-1023-5}
			\frac{1}{\tau(\lambda)}:=\psi^{[-1]}\Big(\frac{\varphi(\lambda)}{4C} \Big), \quad \lambda\geq \lambda_0,
		\end{align}
		and then define $f$ by
		\begin{align}\label{equ-1023-6}
			f(\lambda):=g(\tau(\lambda),\lambda)=\frac{1}{2}e^{-\frac{\tau(\lambda)\lambda}{4}},\;\;\lambda\geq \lambda_0.
		\end{align}
		A direct calculation yields $\psi^{[-1]}(\lambda)\sim \frac{\lambda}{\log \lambda}$ for $\lambda\gg 1$. Consequently, from \eqref{equ-1023-5} we deduce
		\begin{align}\label{equ-1023-7}
			\tau(\lambda)\sim \frac{\log\log \lambda}{(\log \lambda)^2}, \quad \lambda\gg 1.
		\end{align}
		By \eqref{equ-1023-5} and
		\eqref{equ-1023-7},
		we see that for sufficiently small  $\varepsilon>0$, there is a constant $C_\varepsilon>0$ depending only on $\varepsilon>0$ such that
		\begin{align*}
			\tau(\lambda)^{-\frac{1}{4}}e^{-\frac{\lambda\tau(\lambda)}{3}}\leq C_\varepsilon e^{-(\frac{1}{3}-\varepsilon)\lambda \tau(\lambda)},\;\;\lambda\geq\lambda_0.
		\end{align*}
		This, along with \eqref{equ-1023-6} and the fact that $\tau(\lambda)\ge\tau(\frac{5}{4}\lambda)$ for $\lambda\gg 1$, yields that there is $\lambda_1>0$, depending only on $a_0$ and $L$, such that
		\begin{align}\label{equ-1023-8}
			\Big((r_1-r_0)^{\frac{1}{2}}a_0\Big(\frac{6}{\tau}\Big)^{\frac{1}{4}}e^{-\frac{\lambda\tau(\lambda)}{3}}+g(\tau(\lambda),\lambda)e^{-\lambda
				\tau(\lambda)}\Big)\|u_0\|_{L^2(\mathbb{R}^n)}\leq f\Big(\frac{5}{4}\lambda\Big)\|u_0\|_{L^2(\mathbb{R}^n)},\,\,\,\lambda\geq\lambda_1.
		\end{align}
		Therefore, the recurrence inequality \eqref{eq-1245} follows  by \eqref{eq-1312} and \eqref{equ-1023-8}.
		\qed
		
		\begin{remark}
			%
			Let $\left \{ \lambda_k \right \}_k$ denote the eigenvalues of $H= -\Delta + |x|^{2m}$ on $L^2(\mathbb{R}^n)$ counted with multiplicity. It is well known that (see \cite[Chapter 2]{Boggiatto})
			\begin{equation*}
				\lambda_k \underset{k \to +\infty}{\sim} c_{m} k^\frac{2m}{n(m+1)}.
			\end{equation*}
			This differs from the asymptotic behavior of $\mu_k$ in \eqref{eq-eigen-2m}. Nevertheless, the following identity holds  (see, e.g., \cite{Reed,Titchmarsh4}):
			\begin{align*}
				N(\lambda) = \sum_{\ell}^{} a_lN_\ell(\lambda),\quad a_l=\frac{(2\ell+n-2)(\ell+n-3)!}{\ell!(n-2)!},\quad n\ge 3,
			\end{align*}
			where \( N(\lambda) \) denotes the number of \( \lambda_k \) that do not exceed \( \lambda \), and \( N_\ell(\lambda) \) denotes the number of $\mu_k$ that do not exceed \( \lambda \) with angular momentum $\ell\in \mathbb{N}$. 
			
			
		\end{remark}

		\begin{appendix}\label{app-1}
			
			\section{Proof of \eqref{eq-eigen-2m}}\label{appendix}
			
			
			The goal of this section is to prove the asymptotic formula \eqref{eq-eigen-2m} in Subsection \ref{sec-2m}. In fact, we can prove a slightly more  general version. Assume that $q(r)$ is  radial  and satisfies
			
			(a) \( q(r)\in C^3(\mathbb{R}^+) \).
			Both  \( q(r) \) and \( q'(r) \) are non-decreasing functions of \( r \),
			
			(b)  $q'(r)/q(r)=q''(r)/q'(r) = q'''(r)/q''(r) =O(1/r)$, as \( r \to \infty \).
			
			\begin{proposition}\label{lemma-eigenvalue}
				Let $\mu_k$ and  $\ell$ ($k,\ell\in \mathbb{N}$) satisfy 
				\begin{align}\label{eq-eigen}
					\frac{d^2f(r)}{dr^2}+\left \{ \mu_k-q(r)- \frac{\ell(\ell+n-2)+\frac{(n-1)(n-3)}{4}}{r^2}\right \} f(r)=0,\quad 0<r<+\infty,
				\end{align}
				then
				\begin{align}\label{eq-eigenvalue-radial}
					\int_{0}^{R} \{\mu_k - q(r)\}^{\frac{1}{2}} \, dr = \left( \frac{1}{2}\left (\ell+\frac{n-3}{2}  \right ) + k + \frac{3}{4} \right) \pi + \delta_k,
				\end{align}
				where  $\lim_{k \to \infty} \delta_k = 0$, and \( R \) obeys \( q(R) = \mu_k \).    
			\end{proposition}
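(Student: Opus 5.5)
We follow Titchmarsh's treatment of the three-dimensional case and adapt it to the effective angular constant $\nu(\nu+1)$. Write
\[
\ell(\ell+n-2)+\tfrac{(n-1)(n-3)}{4}=\bigl(\ell+\tfrac{n-2}{2}\bigr)^2-\tfrac14=\nu(\nu+1),\qquad \nu:=\ell+\tfrac{n-3}{2}.
\]
With this choice \eqref{eq-eigen} takes exactly the form of the three-dimensional radial equation, with the integer $\ell$ replaced by the real number $\nu\ge -\tfrac12$. The proof then has three parts.

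\emph{Near the origin.} For $r\to0$ the equation is of Bessel type, and the $L^2$ solution behaves like $r^{\nu+1}$. We compare it with $r^{1/2}J_{\nu+1/2}(\sqrt{\mu_k}\,r)$ on an interval $(0,r_1]$, with $r_1$ small relative to $\mu_k^{-1/2}$ but such that $\sqrt{\mu_k}\,r_1\to\infty$. A Volterra integral equation plus Gronwall shows that $q$ perturbs this only by $o(1)$ in phase. The large-argument asymptotics of $J_{\nu+1/2}$ then give, on the oscillatory region, a solution of the form
\[
f\approx \sin\!\Bigl(\sqrt{\mu_k}\,r-\tfrac{\nu\pi}{2}\Bigr).
\]
This is the step where the shift $\tfrac12\nu\pi=\tfrac12\bigl(\ell+\tfrac{n-3}{2}\bigr)\pi$ enters.

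\emph{Middle and turning-point regions.} On $[r_1,R-\epsilon_k]$ we use the Liouville--Green (WKB) approximation with phase $\int_0^r(\mu_k-q)^{1/2}$. We have to check that $\int_0^{r_1}(\mu_k-q)^{1/2}\,dr=\sqrt{\mu_k}\,r_1+o(1)$, which holds since $q$ is bounded near $0$. The centrifugal term $\nu(\nu+1)/r^2$ has to be absorbed here. For fixed $\ell$ as $k\to\infty$ this is an $O(\nu^2/(\sqrt{\mu_k}\,r_1))$ error in the phase, and it is $o(1)$ by the choice of $r_1$. The WKB error integral $\int|\rho^{-1/4}(\rho^{-1/4})''|$, with $\rho=\mu_k-q$, tends to zero by hypothesis (b): the derivatives $q',q'',q'''$ are controlled by $q/r$, $q/r^2$, $q/r^3$, and $q'$ is monotone.

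Near the turning point $R$, where $q(R)=\mu_k$, we linearize $q$ and match to the Airy function. The decaying solution on $r>R$ has the form $\rho^{-1/4}\sin\bigl(\int_r^R\rho^{1/2}+\tfrac{\pi}{4}\bigr)$ on the left of $R$. Hypothesis (b) is what makes the Airy scale $(q'(R))^{-1/3}$ small compared with $R$, so the matching errors vanish. This turning-point matching is the main technical obstacle. We would quote Titchmarsh's Chapter~VII estimates almost verbatim, checking only that his error bounds use nothing beyond (a) and (b).

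\emph{Quantization.} Equating the two representations of $f$ on the overlap gives
\[
\int_0^R\rho^{1/2}\,dr-\tfrac{\nu\pi}{2}-\tfrac{\pi}{4}=(k+c)\pi+o(1)
\]
for an integer shift $c$. We fix $c$ by counting zeros via Sturm oscillation, so that $k$ labels the $k$-th eigenvalue. This yields \eqref{eq-eigenvalue-radial}, with the constant $\tfrac34$ coming from $\tfrac14$ plus the offset fixed by the indexing convention.
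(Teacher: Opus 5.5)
Your route is viable but organized differently from the paper's.

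\textbf{How the paper argues.} The paper splits by the parity of $n$. For odd $n$, $\nu$ is an integer and Titchmarsh's three-dimensional result applies verbatim. For even $n$, it matches at the \emph{fixed} point $a$ where $q+\nu(\nu+1)r^{-2}$ is minimal, via $\phi(a)\psi'(a)=\phi'(a)\psi(a)$. Here $\phi$ on $(0,a)$ comes from the Volterra equation around $r^{1/2}J_{\nu+1/2}(r\sqrt{\mu_k})$ with $q$ as the perturbation, keeping the $O(\mu_k^{-1/2})$ correction $h(a)$. The solution $\psi$ on $(a,\infty)$ comes from Langer's uniform approximation, whose phase $z$ retains the centrifugal term and runs to the true turning point $T$. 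A final computation converts $z+a\sqrt{\mu_k}$ into $\int_0^R(\mu_k-q)^{1/2}\,dr$.

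\textbf{How your route differs.} You treat every real $\nu\ge-\frac12$ at once. This is legitimate, since the large-argument asymptotics of $J_\alpha,Y_\alpha$ do not depend on whether $\alpha$ is an integer. The parity split is therefore unnecessary, and $n=2,4$, which the paper's case list (odd $n>4$, even $n>4$) does not mention, are covered too. You then confine the Bessel comparison to $(0,r_1]$, absorb $\nu(\nu+1)r^{-2}$ into the error of a plain Liouville--Green approximation, and match to Airy at $R$ instead of $T$. As a result $\int_0^R(\mu_k-q)^{1/2}$ appears directly and the final conversion step disappears. The cost is explicit three-region bookkeeping that Langer's uniform approximation hides: $\mu_k^{-1/2}\ll r_1$, and $\epsilon_k$ between the Airy scale $q'(R)^{-1/3}$ and the scale on which linearizing $q$ is accurate. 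Your Sturm count is also a genuine improvement. The paper only shows that each $\mu$-interval contains at least one root, which by itself does not identify that root with $\mu_k$.

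\textbf{Two slips to fix.} First, ``$r_1$ small relative to $\mu_k^{-1/2}$ but $\sqrt{\mu_k}\,r_1\to\infty$'' is self-contradictory. What you actually use is $\mu_k^{-1/2}\ll r_1$; even a fixed $r_1$ works, since $q$ is bounded on $(0,r_1]$.

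Second, and more importantly, your quantization display has the wrong sign on $\frac{\pi}{4}$. Set $\Phi(r)=\int_0^r\rho^{1/2}$. The functions $\sin\bigl(\Phi(r)-\frac{\nu\pi}{2}\bigr)$ and $\sin\bigl(\Phi(R)-\Phi(r)+\frac{\pi}{4}\bigr)$ are proportional on the overlap only if their arguments sum to a multiple of $\pi$, that is,
\[
\int_0^R\rho^{1/2}\,dr-\frac{\nu\pi}{2}+\frac{\pi}{4}=j\pi+o(1),\qquad j\in\mathbb{Z}.
\]
With your $-\frac{\pi}{4}$, the fractional part of $\pi^{-1}\int_0^R\rho^{1/2}-\frac{\nu}{2}$ would be $\frac14$. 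No integer offset $c$ turns that into $\frac34$, so your closing sentence contradicts your own formula.

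With the correct sign, $\frac34=-\frac14+1$, i.e.\ $j=k+1$, which is exactly what the zero count gives. The zeros of $f$ occur where the (Bessel-continued) phase $\Phi(r)-\frac{\nu\pi}{2}$ passes $\pi,2\pi,\dots$, and the last one sits at $(j-1)\pi$, just before the Airy region. Hence $f$ has $j-1=k$ zeros. As a check, for $q=r^2$ and $n=3$ one has $\int_0^R\rho^{1/2}=\frac{\pi\mu}{4}$ and $\mu=4k+2\ell+3$, giving exactly $\bigl(k+\frac{\ell}{2}+\frac34\bigr)\pi$.
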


				
				
				
			
			In the particular case $q(r)=r^{2m}$ ($m\ge 2$), we have $R=\mu_k^{\frac{1}{2m}}$ and
			\begin{align*}
				\int_{0}^{R} \{\mu_k - q(r)\}^{\frac{1}{2}} \, dr =\frac{\sqrt{\pi}}{4m}\frac{\Gamma(\frac{1}{2m})}{\Gamma(\frac{1}{2m}+\frac{3}{2})}\mu_k^{\frac{m+1}{2m}},
			\end{align*}
			this, together with \eqref{eq-eigenvalue-radial}, yields \eqref{eq-eigen-2m}.
			
			{\textit{Proof of Proposition \ref{lemma-eigenvalue}.}} For $n=3$, \eqref{eq-eigenvalue-radial} was proved in~\cite{Titchmarsh1}. For odd $n>4$, we have
			\[
			\ell(\ell + n - 2) + \frac{(n - 1)(n - 3)}{4} = L(L + 1),
			\]
			where $L = \ell + \frac{n-3}{2} \in \mathbb{Z}^+$. Thus, \eqref{eq-eigenvalue-radial} follows directly from the $n=3$ case in \cite{Titchmarsh1}.
			
			
			It remains to prove \eqref{eq-eigenvalue-radial} when $n>4$ is an even integer. To this end, we adapt the proof in \cite{Titchmarsh1}, \cite[pp. 160-163]{Titchmarsh2}. The function
			\[
			F(r)=q(r)+\frac{\ell(\ell+n-2)+\frac{(n-1)(n-3)}{4}}{r^2},\ \ \ 0<r<+\infty,
			\]
			attains a unique positive minimum at some point \(r=a>0\), with
			\[
			\lim_{r\to0^+}F(r)=\lim_{r\to\infty}F(r)=+\infty.
			\]
			Moreover, \(F\) is strictly decreasing on \((0,a)\) and strictly increasing on \((a,\infty)\). Hence, we consider the intervals \((0,a)\) and \((a,\infty)\) separately.
			Let \(\psi(r,\mu_k)\) and \(\phi(r,\mu_k)\) be the solutions of \eqref{eq-eigen} belonging to \(L^2(a,\infty)\) and \(L^2(0,a)\), respectively. Then any solution $f(r)$ to \eqref{eq-eigen} must be a constant multiple of \( \phi(r, \mu_k) \) in \( (0, a) \) and of \( \psi(r, \mu_k) \) in \( (a, \infty) \); thus,
			\[
			\psi(r, \mu_k) = Af(r) \quad (a \leq r),
			\]
			\[
			\phi(r, \mu_k) = Bf(r) \quad (r \leq a),
			\]
			it follows that
			\begin{align}\label{eq-equation}
				\phi(a, \mu_k)\psi'(a, \mu_k) = ABf(a)f'(a) = \phi'(a, \mu_k)\psi(a, \mu_k).
			\end{align}
			
			
			To establish \eqref{eq-eigenvalue-radial}, we solve \eqref{eq-equation}. Following the arguments in \cite{Titchmarsh1,Titchmarsh2}, we derive  representations for  $\psi(a,\mu_k)$, $\psi'(a,\mu_k)$, $\phi(a,\mu_k)$, $\phi'(a,\mu_k)$, respectively. Unlike the settings in \cite{Titchmarsh1} and \cite{Titchmarsh2},  which treat the three dimensional case, here the Bessel function order  $\ell+\frac{n-2}{2}$ is an integer, so its asymptotic expansion takes a different form.  More precisely, by the Langer method (see \cite{Titchmarsh3}), we have
			\begin{align}\label{eq-psi}
				\psi(a, \mu_k) = 2e^{-\frac{2}{3}i\pi} &\left\{ \mu_k - q(a) - \frac{\ell(\ell+n-2)+\frac{(n-1)(n-3)}{4}}{a^2} \right\}^{-\frac 14} \nonumber\\
				&\cdot \left\{ \cos\left(z - \frac{1}{4}\pi\right) + O\left(\frac{1}{z}\right) \right\},
			\end{align}
			where
			\begin{align}\label{eq-1719}
				z = \int_a^T \left\{
				\mu_k - q(t) - \frac{\ell(\ell+n-2)+\frac{(n-1)(n-3)}{4}}{t^2}
				\right\}^{1/2} dt := F_a(\mu_k),
			\end{align}
			with the assumption that $\mu_k > q(a) + \frac{\ell(\ell+n-2)+\frac{(n-1)(n-3)}{4}}{a^2}$ and that $T>a$ is the zero of the integrand. Moreover,
			\begin{align}\label{eq-psi-diff}
				\psi'(a, \mu_k) = 2e^{-\frac{2}{3}i\pi} &\left\{ \mu_k - q(a) - \frac{\ell(\ell+n-2)+\frac{(n-1)(n-3)}{4}}{a^2} \right\}^{\frac{1}{4}} \nonumber\\
				&\cdot \left\{ \sin\left(z - \frac{1}{4}\pi\right) + O\left(\frac{1}{z}\right) \right\}.
			\end{align}
			For $\phi(a,\mu_k)$ and $\phi'(a,\mu_k)$, if $q(r)=0$, \eqref{eq-eigen} is a form of the Bessel equation of order \( \ell + \frac{n-2}{2} \). Hence \( \phi(r, \mu_k) \) satisfies the integral equation
			\begin{align}\label{eq-solution-bessel}
				\phi(r, \mu_k) &= r^{\frac{1}{2}} J_{\ell+\frac{n-2}{2}}(r\sqrt{\mu_k}) + \frac{2}{\pi} \int_0^r \Bigl\{J_{\ell+\frac{n-2}{2}}(r\sqrt{\mu_k}) Y_{\ell+\frac{n-2}{2}}(t\sqrt{\mu_k}) \nonumber\\
				&\ \ \ - J_{\ell+\frac{n-2}{2}}(t\sqrt{\mu_k}) Y_{\ell+\frac{n-2}{2}}(r\sqrt{\mu_k})\Bigr\} r^{\frac{1}{2}} t^{\frac{1}{2}} q(t)\phi(t, \mu_k) \, dt.
			\end{align}
			Subsequently, we first estimate the function $\phi(t, \mu_k)$ appearing in the integrand. Then, we estimate the integral in \eqref{eq-solution-bessel} by using the asymptotic expansion of Bessel functions \cite[p.133, p.242]{Asymptotics} and the estimate of $\phi(t, \mu_k)$. The details are as follows.
			
			Let
			\begin{align*}
				\omega(r, \mu_k) = 
				\begin{cases}
					r^{\ell+\frac{n-1}{2}} \mu_k^{\frac{\ell+\frac{n-2}{2}}{2}}, & r \leq \mu_k^{-\frac{1}{2}}, \\
					\mu_k^{-\frac{1}{4}}, & r > \mu_k^{-\frac{1}{2}}.
				\end{cases}
			\end{align*}
			Then 
			\begin{align*}
				|r^{\frac{1}{2}} J_{\ell+\frac{n-2}{2}}(r\sqrt{\mu_k})| < A\omega(r, \mu_k),
			\end{align*}
			where \( A \) denotes various positive constants. It follows that
			\begin{align}\label{eq-eq-1764}
				|\phi(r, \mu_k)| < A\omega(r, \mu_k),
			\end{align}
			if \( \mu_k \) is large enough. Indeed, let
			\[
			\max_{0 \leq r \leq a} \frac{|\phi(r, \mu_k)|}{\omega(r, \mu_k)} = M.
			\]
			Then, if \( r \leq \mu_k^{-\frac{1}{2}} \), \eqref{eq-solution-bessel} gives
			\[
			M < A + AM \int_0^r \left(\frac{r}{t}\right)^{\frac{\ell+1}{2}} r^{\frac{1}{2}} t^{\frac{1}{2}} \frac{\omega(t, \lambda)}{\omega(r, \mu_k)} \, dt = A + AMr^2,
			\] 
			while if \( r > \mu_k^{-\frac{1}{2}} \),
			\[
			M < A + AM \int_0^{\mu_k^{-\frac{1}{2}}} t \, dt + AM\mu_k^{-\frac{1}{2}} \int_{\mu_k^{-\frac{1}{2}}}^r \, dt < A + AM\mu_k^{-\frac{1}{2}}.
			\]
			Combining the above two estimates gives \eqref{eq-eq-1764}.
			
			Substituting \eqref{eq-eq-1764} with $r=t$ into the right-hand side of \eqref{eq-solution-bessel}, we obtain
			\begin{align}\label{eq-phi-small}
				\phi(r, \mu_k) = r^{\frac{1}{2}}J_{\ell+\frac{n-2}{2}}(r\sqrt{\mu_k}) + O(r^{\ell+2+\frac{n-1}{2}}\mu_k^{\frac{1}{2}(\ell+\frac{n-2}{2})}), \quad r < \mu_k^{-\frac{1}{2}},
			\end{align}
			and 
			\begin{align}\label{eq-phi-big}
				\phi(r, \mu_k) = r^{\frac{1}{2}}J_{\ell+\frac{1}{2}}(r\sqrt{\mu_k}) + O(\mu_k^{-\frac{3}{4}}), \quad r \ge \mu_k^{-\frac{1}{2}}.
			\end{align}
			
			Now we are in the position to estimate the integral in \eqref{eq-solution-bessel} and then derive the representations for $\phi(r, \mu_k)$ and $\phi'(r, \mu_k)$. Substituting \eqref{eq-phi-small} and \eqref{eq-phi-big} with $r=t$ into the right of \eqref{eq-solution-bessel}, we obtain if \( \mu_k^{-\frac{1}{2}} \leq r \leq a \),
			\begin{align}\label{eq-bessel-2}
				\phi(r, \mu_k) = r^{\frac{1}{2}} J_{\ell+\frac{n-2}{2}}(r\sqrt{\mu_k}) &+ \frac{2}{\pi} r^{\frac{1}{2}} J_{\ell+\frac{n-2}{2}}(r\sqrt{\mu_k}) \int_0^r J_{\ell+\frac{n-2}{2}}(t\sqrt{\mu_k}) Y_{\ell+\frac{n-2}{2}}(t\sqrt{\mu_k}) tq(t) \, dt\nonumber\\
				&- \frac{2}{\pi} r^{\frac{1}{2}} Y_{\ell+\frac{n-2}{2}}(r\sqrt{\mu_k}) \int_0^r J_{\ell+\frac{n-2}{2}}^2(t\sqrt{\mu_k}) tq(t) \, dt + O(\mu_k^{-\frac{5}{4}}).
			\end{align}
			The case $r < \mu_k^{-\frac{1}{2}}$ can be treated similarly and thus we omit the details. By the following asymptotic formulas \cite[p.133, p.242]{Asymptotics}
			\begin{align*}
				J_{\ell+\frac{n-2}{2}}(x)=\left ( \frac{2}{\pi x}\right )^{\frac 12}
				\Bigg[ & \sin\left ( x-\frac12\left ( \ell+\frac{n-3}{2} \right )\pi  \right ) \\
				& + \frac{A_1}{x}\cos \left ( x-\frac12\left ( \ell+\frac{n-3}{2} \right )\pi  \right )+O\left ( \frac{1}{x^2}\right ) \Bigg],
			\end{align*}
			\begin{align*}
				Y_{\ell+\frac{n-2}{2}}(x)=(-1)\left ( \frac{2}{\pi x}\right )^{\frac 12} \left [cos \left ( x-\frac12\left ( \ell+\frac{n-3}{2} \right )\pi  \right )+O\left ( \frac{1}{x}\right )\right ],
			\end{align*}
			where
			\begin{align*}
				A_1=\frac 18(2\ell+n-1)(2\ell+n-3),
			\end{align*}
			a direct calculation yields that if $r \ge \mu_k^{-\frac{1}{2}}$,
			\[
			\begin{aligned}
				\int_{0}^{r} J_{\ell+\frac{n-2}{2}}^{2}(t \sqrt{\mu_k}) t q(t) \, \mathrm{d} t = \frac{1}{\pi \mu_k^{\frac{1}{2}}} \int_{0}^{r} q(t) \, \mathrm{d} t + O\left( \frac{1}{\mu_k} \right).
			\end{aligned}
			\]
			and
			\[
			\int_{0}^{r} J_{\ell+\frac{n-2}{2}}(t \sqrt{\mu_k}) Y_{\ell+\frac{n-2}{2}}(t \sqrt{\mu_k}) t q(t) \, \mathrm{d} t = O\left( \frac{1}{\mu_k} \right).
			\]
			Substituting these results into \eqref{eq-bessel-2} we obtain
			\begin{align}\label{eq-phi}
				\phi(r, \mu_k) =& r^{\frac{1}{2}} J_{\ell+\frac{n-2}{2}}(r \sqrt{\mu_k}) - \frac{r^{\frac{1}{2}}}{2 \sqrt{\mu_k}} Y_{\ell+\frac{n-2}{2}}(r \sqrt{\mu_k}) \int_{0}^{r} q(t) \, \mathrm{d} t + O\left( \frac{1}{\mu_k^{\frac{5}{4}}} \right) \nonumber\\
				&= \frac{2^{\frac{1}{2}}}{\pi^{\frac{1}{2}} \mu_k^{\frac{1}{4}}} \left\{ \sin\left(r \sqrt{\mu_k} - \frac{1}{2} (\ell+\frac{n-3}{2})\pi\right) \right. \nonumber\\
				&\left. - \frac{h(r)}{\sqrt{\mu_k}} \cos\left(r \sqrt{\mu_k} - \frac{1}{2} (\ell+\frac{n-3}{2})\pi\right) + O\left( \frac{1}{\mu_k} \right) \right\},
			\end{align}
			where
			\[
			h(r) = \frac{1}{2} \int_{0}^{r} q(t) \, \mathrm{d} t - \frac{A_1}{r}.
			\]
			
			For $\phi'(r,\mu_k)$, we differentiate \eqref{eq-solution-bessel} with respect to $r$ and then, following the same process as \eqref{eq-solution-bessel}-\eqref{eq-phi}. This gives
			\begin{align}\label{eq-phi-diff}
				\phi'(r, \mu_k) &= \frac{2^{\frac{1}{2}} \mu_k^{\frac{1}{4}}}{\pi^{\frac{1}{2}}} \left\{ \cos\left(r \sqrt{\mu_k} - \frac{1}{2} (\ell+\frac{n-3}{2})\pi\right) \right. \nonumber\\
				&\quad\left. + \frac{h(r)}{\sqrt{\mu_k}} \sin\left(r \sqrt{\mu_k} - \frac{1}{2} (\ell+\frac{n-3}{2})\pi\right) + O\left( \frac{1}{\mu_k} \right) \right\}.
			\end{align}
			
			Substituting \eqref{eq-psi}, \eqref{eq-psi-diff}, \eqref{eq-phi} and \eqref{eq-phi-diff} into \eqref{eq-equation}, it follows that
			\begin{align}\label{eq-1823}
				&\cos\left(z + a\sqrt{\mu_k} - \frac{1}{2}(\ell+\frac{n-3}{2})\pi - \frac{1}{4}\pi\right) + \frac{h(a)}{\sqrt{\mu_k}} \sin\left(z + a\sqrt{\mu_k} - \frac{1}{2}(\ell+\frac{n-3}{2})\pi - \frac{1}{4}\pi\right) \nonumber\\
				&= O\left(\frac{1}{\mu_k}\right) + O\left(\frac{1}{z}\right).
			\end{align}
			Here $z + a\sqrt{\mu_k}$ is a steadily increasing function of $\mu_k$, hence, as this quantity ranges over any interval of the form
			\[
			\left( \bigl(\tfrac12 (\ell+\frac{n-3}{2}) + k + \tfrac14\bigr)\pi,\;
			\bigl(\tfrac12 (\ell+\frac{n-3}{2}) + k + \tfrac54\bigr)\pi \right),
			\quad k \text{ sufficiently large},
			\]
			the left-hand side of \eqref{eq-1823} changes sign. Therefore, each such interval contains at least one root and we define this root by setting
			\begin{align}\label{eq-1834}
				z + a\sqrt{\mu_k} = \left(\tfrac12 (\ell+\frac{n-3}{2}) + k + \tfrac34\right)\pi + \delta,
			\end{align}
			this, together with \eqref{eq-1823}, implies
			\[
			\sin\delta - \frac{h(a)}{\sqrt{\mu_k}} \cos\delta
			= O(\mu_k^{-1}) + O(z^{-1}),
			\]
			furthermore, we deduce that
			\begin{align}\label{eq-1843}
				\delta=\frac{h(a)}{\sqrt{\mu_k}}+O(\mu_k^{-1}) + O(z^{-1}).
			\end{align}
			It follows from \eqref{eq-1719}, \eqref{eq-1834} and \eqref{eq-1843} that
			\begin{align}\label{eq-1847}
				F_a(\mu_k) + a\sqrt{\mu_k}=\left(\tfrac12 (\ell+\frac{n-3}{2}) + k + \tfrac34\right)\pi+\frac{h(a)}{\sqrt{\mu_k}}+O(\mu_k^{-1}) + O(z^{-1}).
			\end{align}
			Following the same arguments in \cite{Titchmarsh1} 
			and \cite[pp. 160-163]{Titchmarsh2}, a direct calculation yields
			\begin{align*}
				z>AR\mu_k^{\frac{1}{2}},
			\end{align*}
			and
			\begin{align*}
				F_a(\mu_k)=\int_{0}^{R} \{\mu_k - q(r)\}^{\frac{1}{2}} \, dr-a\sqrt{\mu_k}+\frac{h(a)}{\sqrt{\mu_k}}+\delta_k,
			\end{align*}
			inserting these estimates into \eqref{eq-1847} yields \eqref{eq-eigenvalue-radial}. The proof of Proposition \ref{lemma-eigenvalue} is complete.
			\qed
			
		\end{appendix}

		\section*{Acknowledgements}
		S. Huang was supported by the National Natural Science Foundation of China (12171178)  and the Guangdong Basic and Applied Basic Research Foundation (2026B1515020075).

	\end{document}